\documentclass[reqno,makeidx,11pt]{amsart}
\usepackage{amssymb,amsfonts,amsmath,graphicx
}

\setlength{\oddsidemargin}{0.in}
\setlength{\evensidemargin}{0.3in}
\setlength{\headsep}{0.5in}
\setlength{\headheight}{0.5in}
\setlength{\topmargin}{0.25in}
\setlength{\textheight}{7.5in}
\setlength{\textwidth}{5.4in}
\setlength{\marginparwidth}{1in}
\setlength{\marginparsep}{0.5in}
\setlength{\parskip}{0.05in}

\def\N{{\mathbb{N}}}
\def\0{{\mathbb{O}}}

\def\Q{{\mathbb{Q}}}
\def\R{{\mathbb{R}}}

\def\Z{{\mathbb{Z}}}

\newcommand{\set}[1]{{\left\{{#1}\right\}}}

\newcommand{\actson}{\curvearrowright}

\newcommand{\Iso}{{\rm Iso}}

\def\fig{ \centerline{Fig. \the\count200\global\advance\count200 by 1}}
\count200=1

\newtheorem{thm}{Theorem}[section]

\newtheorem{lem}[thm]{Lemma}
\newtheorem{prop}[thm]{Proposition}

\newtheorem*{thm*}{Theorem}
\newtheorem*{lem*}{Lemma}
\newtheorem*{cor*}{Corollary}

\theoremstyle{definition}
\newtheorem{defn}[thm]{Definition}
\newtheorem{ex}[thm]{Example}

\theoremstyle{remark}
\newtheorem{rem}[thm]{Remark}

\title[Invariant proper metrics on coset spaces]
{Invariant proper metrics on coset spaces}

\author{Claire Anantharaman-Delaroche}
\address{Laboratoire de Math\'ematiques-Analyse, Probabilit\'es, Mod\'elisation-Orl\'eans (MAPMO - UMR6628),Ê
F\'ed\'eration Denis Poisson (FDP - FR2964),
CNRS/Universit\'e d'Orl\'eans,
B. P. 6759, F-45067 Orl\'eans Cedex 2}
\email{claire.anantharaman@univ-orleans.fr}

\subjclass[2010]{Primary  22F30; Secondary  54E99, 54H11}

\keywords{Coset spaces, invariant proper metrics}

\begin{document}

\begin{abstract} It is known that for every second countable locally compact group $G$, there exists a proper $G$-invariant metric
which induces the topology of the group. This is no longer true for coset spaces $G/H$ viewed as $G$-spaces. We study
necessary and sufficient conditions which ensure the existence of such metrics on $G/H$.
\end{abstract}
\maketitle

\section*{Introduction} 

Given a Hausdorff topological group $G$, the study of the existence of a $G$-invariant\footnote{We let $G$ act on itself to the left.} metric on $G$ which defines its topology (in which case we say that the metric is compatible)   is an old problem which was solved by Birkhoff \cite{Bir} and Kakutani \cite{Kak}: a topological group is metrizable if and only if  it is Hausdorff and  the identity element $e$ of $G$ has a countable basis of neighbourhoods (i.e. $G$ is first countable). Moreover in this case the metric can be taken to be $G$-invariant (see for instance \cite[\S 3]{Bou1045} or \cite[\S 1.22]{M-Z}). More recently, in the last decade,  appeared the need for  $G$-invariant compatible metrics  which are proper in the sense that the balls are relatively compact. This is for instance the case when one wants to attack  the Baum-Connes
conjecture by considering proper affine isometric actions of $G$ on various Banach spaces, following an idea of Gromov
(see \cite{Tu}, \cite{H-P}). Actually, this problem had been solved in the seventies by Struble \cite{Struble}: a locally compact group $G$ has a proper  $G$-invariant compatible metric if and only if it is second countable.

In this paper, we are interested in the same questions on coset spaces.  Let $H$ be a closed subgroup of a Hausdorff topological group $G$. Whenever $G$ is metrizable, it is also the case for the quotient topological space $G/H$ (see \cite[\S 1.23]{M-Z}). However, even if $G$ is a nice second countable locally compact  group, there does not always exist a  $G$-invariant compatible metric on $G/H$ (see Example \ref{ex:neg}). Apart from the trivial case where $H$ is a normal subgroup of $G$, there is another  well-known exception: when $G$ is locally compact second countable and $H$ is  compact, it is easily seen, using Struble's theorem, that there is a proper $G$-invariant compatible metric on $G/H$ (Proposition \ref{compact}).

We extend this result to give a necessary and sufficient condition (Proposition \ref{CNS}) for the existence of such a metric in general. Several simple necessary conditions
for this existence follow immediately (Proposition \ref{prop:CN}): 
\begin{itemize}
\item there must exist  a non-zero relatively invariant positive measure on $G/H$ (i.e. the modular function of $H$ extends to a continuous homomorphism from $G$ to $\R_{*}^+$;
\item $H$ must be maximally almost periodic when the $G$-action on $G/H$ is effective\footnote{Without lost of generality, we can always make this assumption (see Remark \ref{rem:utile}).}; 
\item $H$ must be almost normal in $G$ in a topological sense (see Definition \ref{def:an}). 
\end{itemize}
However, these three conditions together are not enough in general to imply the existence of a proper  $G$-invariant compatible metric on $G/H$ (see Example \ref{ex:ex}). A 
noteworthy exception is the case where $G/H$ is discrete. Of course, the usual discrete metric on $G/H$ is $G$-invariant but it is not proper unless $G/H$
is finite. We show that  
 a proper ({\it i.e.} with finite balls) $G$-invariant compatible metric exists on the discrete coset space $G/H$ if and only if $H$ is an almost normal subgroup of $G$ (Theorem \ref{thm:discrete}).

 We also observe that in the opposite case where $G/H$ is connected, the existence of a $G$-invariant compatible metric and of a proper one are equivalent (Proposition \ref{prop:connected}) and it seems to be rather scarce (in non-trivial situations).

In Section 3, we consider briefly the case of a general $G$-space $X$. We show that for a group $G$ acting on a countable (discrete!) space $X$, there exists a proper $G$-invariant compatible metric if and only if the orbits of all the stabilizers of the action are finite. The proof uses a recent nice  theorem of Abels, Manoussos and Noskov \cite{A-M-N}, whose arduous proof  shows that 
whenever  a locally compact group $G$  acts properly on a second countable locally compact space $X$,
 there exists a proper $G$-invariant compatible metric on $X$.
 
 Of course, our proposition \ref{compact} in Section 2 is a direct consequence of this latter result from \cite{A-M-N}. However, it seemed to us more accessible to deduce this proposition \ref{compact} directly from its particular case which is the above mentioned theorem of Struble.
 
 In Section 1, we first introduce some notation and definitions. Since the notion of proper action plays a crucial role in our study, we also recall there, or prove, the related facts that we need in the sequel.

\section{Proper actions on metric spaces}
\subsection{Preliminaries} The topological spaces considered in this paper will always  be Hausdorff. We recall that a locally compact space $X$ is second countable if and only if it is metrizable and $\sigma$-compact (see for instance \cite[page 43]{Bou1045}). In particular, a locally compact  group is second countable if and only if it is
first countable and $\sigma$-compact.

Let $G$ be a topological group acting continuously on a locally compact space $X$. The action is said to be {\it proper} if the map $(g,x) \to (gx,x)$
from $G\times X$ into $X\times X$ is proper. This property is equivalent to the fact that  for every pair $A,B$
of compact subsets of $X$, the set $\set{g\in G: gA\cap B \not= \emptyset}$ is relatively compact, and also to the fact that for every pair $x,y$
of points of $X$ there exist neighbourhoods $V_x$ of $x$ and $V_y$ of $y$ such that $\set{g\in G: gV_x\cap V_y \not= \emptyset}$ is relatively compact

 Recall that  a topological group $G$  which acts properly on a locally compact space is itself locally compact.

The space of continuous functions from a topological space into itself will always be equipped with the compact-open topology.

Given a metric space $(X,d)$, $x\in X$ and $r>0$, we shall denote by $B(x,r)$ the closed ball $\set{y\in X: d(x,y) \leq r}$.
Whenever these balls are compact, we say the $(X,d)$ is a {\it proper metric space}\footnote{A proper and discrete metric space is also said to be {\it locally finite}.}. Complete riemannian manifolds and locally finite graphs with the geodesic distance
are such examples.

A metric is said to be discrete if it induces the discrete topology. We shall denote by $\delta$ the usual discrete metric such that $\delta(x,y) = 1$ whenever $x\not= y$.
Note that $(X,\delta)$ is a proper metric space only when $X$ is a finite set.

\subsection{Proper actions of $\Iso(X,d)$}

 Let $(X,d)$ be a metric space. We denote by $\Iso(X,d)$ its group of isometries, equipped with the compact-open topology. We recall that
this topology coincides with the topology of pointwise convergence (\cite[Th\'eor\`eme 1, page 29]{Bou1084}), that $\Iso(X,d)$ is a topological group
(\cite[Corollaire, page 48]{Bou1084}) and that $\Iso(X,d)$ acts continuously on $X$. However, even if $X$ is locally compact, the group $\Iso(X,d)$
is not always locally compact. It suffices for instance to consider the case of the group of bijections of the set $X = \N$ with its metric $\delta$.

We begin by reviewing some topological features which insure that $\Iso(X,d)$ is locally compact and acts properly on $X$.
The earliest  result is the following theorem due to Dantzig and van der Waerden \cite{D-W}.

\begin{thm}\label{D-W} Let $(X,d)$ be a locally compact, connected, metric space. The group $\Iso(X,d)$ of isometries of $X$ is locally compact, second countable and acts properly on $X$.
\end{thm}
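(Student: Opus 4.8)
The plan is to proceed in several stages, establishing first the properness of the action, and then deducing local compactness and second countability of $\Iso(X,d)$ as consequences. The key geometric input is that in a locally compact, connected, \emph{locally compact} metric space, closed balls of sufficiently small radius around a fixed point are compact, and — crucially — connectedness forces these small balls to ``fill up'' a neighbourhood in a uniform way that an isometry cannot distort.

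First I would record the local-compactness consequence: for each $x\in X$ choose $r_x>0$ such that $B(x,r_x)$ is compact. The first substantive claim is that then $B(x,r)$ is compact for \emph{every} $r>0$, i.e. $(X,d)$ is already a proper metric space. This is where connectedness enters. The argument is the standard ``radius of compactness is infinite'' chaining argument: let $R=\sup\set{r>0: B(x,r)\text{ is compact}}$; if $R<\infty$, cover the compact set $B(x,R/2)$ — wait, one must be a little careful — cover $\overline{B(x,R)}$-candidate by finitely many small compact balls using local compactness at each point of $B(x,R)$, argue that $B(x,R)$ is compact, then push slightly past $R$ using compactness of $B(x,R)$ and local compactness, contradicting maximality; connectedness is used to rule out $R=0$ being forced by a ``gap'' and to ensure $B(x,r)$ is not eventually the whole (bounded) space in a degenerate way. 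So: \emph{Step 1}, $(X,d)$ is a proper metric space, and in particular $\sigma$-compact, hence (being locally compact and metrizable) second countable.

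\emph{Step 2}: properness of the action of $\Iso(X,d)$. Fix $x\in X$ and use the characterization of properness via pairs of points. Given $y,z\in X$, I claim that $S=\set{g\in\Iso(X,d): g B(y,1)\cap B(z,1)\neq\emptyset}$ is relatively compact in the compact-open topology, which by Step 1 equals the topology of pointwise convergence. If $g\in S$ then $d(gy,z)\le 2$, so $\set{gy: g\in S}$ is contained in the compact ball $B(z,2)$; since $g$ is an isometry, $\set{g w: g\in S}\subseteq B(z, 2+d(y,w))$, a compact set, for every $w\in X$. Hence $S$ has relatively compact ``orbits'' at every point. The closure $\overline{S}$ in $X^X$ (product of compacta $B(z,2+d(y,w))$) is compact by Tychonoff; one checks that any limit map is distance-preserving (distances pass to the limit pointwise) and — using Step 1 again, that balls are compact hence an isometric embedding of a proper connected space into itself is onto by a Baire/connectedness argument — is surjective, so $\overline{S}\subseteq\Iso(X,d)$. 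Thus $S$ is relatively compact in $\Iso(X,d)$, and the action is proper.

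\emph{Step 3}: from properness, $\Iso(X,d)$ is locally compact — this is exactly the recalled fact that a topological group acting properly on a locally compact space is itself locally compact. Finally, for second countability: $\Iso(X,d)$ acts properly on the second countable (Step 1) space $X$, and a proper continuous action of a locally compact group on a $\sigma$-compact locally compact space forces the group to be $\sigma$-compact (the preimage of $X\times X$ under the proper map $(g,x)\mapsto(gx,x)$, intersected with $K\times X$ for an exhausting sequence of compacta $K$, exhausts $G$); being locally compact, metrizable (it has a countable neighbourhood basis at $e$ since $X$ does and the action is faithful and proper) and $\sigma$-compact, $\Iso(X,d)$ is second countable.

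The main obstacle I expect is \emph{Step 1}, together with the surjectivity assertion inside Step 2: showing that a self-isometry (or a pointwise limit of self-isometries) of a connected, locally compact, proper metric space is surjective. Without connectedness this can fail (a shift on $\N$ with $\delta$), so the proof must genuinely exploit connectedness — typically by showing the image of a self-isometric embedding is both open (using that small balls are compact and map onto small balls, via the ``filling'' property) and closed (completeness, which follows from properness), hence everything. Getting the uniform ``small balls are mapped onto small balls'' statement rigorously in a merely metric (non-geodesic) setting is the delicate point, and I would likely prove it by the maximal-radius argument of Step 1 applied to both $X$ and its image.
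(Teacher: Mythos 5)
There is a genuine gap, and it sits exactly where your whole plan rests: the claim of Step 1, that a locally compact \emph{connected} metric space is automatically a proper metric space (all closed balls compact), is false. Take $X$ to be a bounded connected open subset of $\R^n$ with the Euclidean metric (e.g.\ the open unit disc), or the strip $\R\times(0,1)$: these are locally compact and connected, small closed balls are compact, but balls whose radius exceeds the distance to the ``missing boundary'' are closed, bounded and non-compact. Your maximal-radius chaining argument breaks down precisely because the radius of compactness $r(y)$ can tend to $0$ as $y$ approaches the boundary of $B(x,R)$, so one cannot ``push past $R$''; connectedness only yields $\sigma$-compactness (hence separability and second countability of $X$), not properness of the metric. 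Since Step 2 then encloses $\{gw: g\in S\}$ in a ball $B(z,2+d(y,w))$ and invokes its compactness for the Tychonoff argument, and Step 3's metrizability/second-countability claims also lean on Step 1, the failure propagates: in the strip example the set $\{gw:g\in S\}$ \emph{is} relatively compact, but not for the reason you give, since the enclosing ball is not compact.

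What is needed instead is the genuinely connectedness-based argument of Dantzig--van der Waerden as presented in Kobayashi--Nomizu (which is what the paper cites, adding only the properness-of-the-action supplement): one shows that the function $x\mapsto r(x)$ (supremum of radii of compact balls at $x$) is either identically $+\infty$ or positive and $1$-Lipschitz; that if a sequence of isometries subconverges at one point, it subconverges at every point of a small ball around it; and then, by connectedness, the set of points of subconvergence is open and closed, hence all of $X$, with a diagonal argument over a countable dense set. Surjectivity of the pointwise limit is a separate lemma, again using connectedness and local compactness (your own remark that this is ``the delicate point'' is apt, but your proposed route to it goes through the false Step 1). Once relative compactness of sets like $E=\{f: d(f(x),y)\le 2r\}$ (for $B(y,2r)$ compact) is established this way, the properness of the $\Iso(X,d)$-action follows as in the paper, and local compactness and second countability of the group follow from the properness of the action and the $\sigma$-compactness of $X$ --- that part of your Step 3 is fine. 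So the architecture after Step 1 is salvageable, but Step 1 itself must be abandoned and replaced by the subconvergence-propagation argument.
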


\begin{proof}  For a proof, see \cite[Theorem 4.7]{K-N}. Since it is not explicitely shown in this reference that the action is proper, we give some
precisions, for the sake of completeness. We have to show that for every $x,y\in X$, there are neighbourhoods $V_x$ and $V_y$ of $x,y$ respectively
such that $\set{f\in \Iso(X,d) : f(V_x) \cap V_y \not=\emptyset}$ is relatively compact. We choose $r>0$ small enough such that the ball $B(y,2r)$ is  compact.
Since
$$\set{f\in \Iso(X,d) : f(B(x,r))\cap B(y,r)  \not=\emptyset} \subset\set{f\in \Iso(X,d) : d(f(x),y) \leq 2r}$$
it suffices to show that the right hand side, denoted by $E$, is relatively compact. Let $(f_n)$ be a sequence\footnote{The use of sequences is justified in
\cite[page 46]{K-N}.} in $E$. Passing, if necessary to a subsequence, we may assume that $(f_n(x))$ converges.  Then, by \cite[Lemma 3, page 47]{K-N} we may assume that $(f_n)$ converges pointwise,
and by \cite[Lemma 4, page 48]{K-N} the limit is an isometry.
\end{proof}

This theorem has been recently extended by Gao and Kechris. As a particular case, they obtain in \cite[Corollary 5.6]{G-K} the  following  result. Its proof is also contained in \cite[Theorem 3.2]{A-M-N}.

\begin{thm}\label{G-K} Let $(X,d)$ be a proper metric space. Then the group $\Iso(X,d)$ is locally compact and acts properly on $X$.
\end{thm}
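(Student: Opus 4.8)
The plan is to reduce the whole statement to the compactness, for a fixed base point $x_{0}\in X$ and every $R>0$, of the set
\[
E_{R}=\set{f\in\Iso(X,d): d(f(x_{0}),x_{0})\leq R}.
\]
Granting this, local compactness of $\Iso(X,d)$ follows immediately: since its topology is that of pointwise convergence, the evaluation $f\mapsto f(x_{0})$ is continuous, so $E_{R}$ contains the open neighbourhood $\set{f: d(f(x_{0}),x_{0})<R}$ of the identity; hence the identity has a compact neighbourhood, and a topological group with this property is locally compact. Properness of the action follows as well: given compact sets $A,B\subset X$, the numbers $C=\sup_{a\in A}d(x_{0},a)$ and $C'=\sup_{b\in B}d(x_{0},b)$ are finite, and if $f\in\Iso(X,d)$ maps some $a\in A$ onto some $b\in B$ then $d(f(x_{0}),x_{0})\leq d(f(x_{0}),f(a))+d(b,x_{0})=d(x_{0},a)+d(b,x_{0})\leq C+C'$; therefore $\set{f: fA\cap B\neq\emptyset}\subset E_{C+C'}$ is relatively compact, and one concludes by the characterization of proper actions recalled above.

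Next I would prove that $E_{R}$ is compact. First, $E_{R}$ is stable under $f\mapsto f^{-1}$: applying $f$ gives $d(f^{-1}(x_{0}),x_{0})=d(x_{0},f(x_{0}))\leq R$. Second, for $f\in E_{R}$ and any $x\in X$ one has $d(f(x),x_{0})\leq d(f(x),f(x_{0}))+d(f(x_{0}),x_{0})\leq d(x,x_{0})+R$, so $E_{R}$ is contained in the product $P=\prod_{x\in X}B\bigl(x_{0},d(x_{0},x)+R\bigr)$, which is compact by Tychonoff's theorem because $X$ is a proper metric space. Since the topology of $\Iso(X,d)$ coincides with the topology of pointwise convergence, it is enough to show that $E_{R}$ is closed in $P$: that is, that any map $g\colon X\to X$ which is a pointwise limit of a net $(f_{\alpha})$ in $E_{R}$ lies in $\Iso(X,d)$ (the bound $d(g(x_{0}),x_{0})\leq R$ then being automatic).

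Such a $g$ is distance-preserving, being a pointwise limit of isometries; the only thing to check is that it is onto, and this is where properness is genuinely used. Since $(f_{\alpha}^{-1})$ also lies in the compact set $P$, after passing to a subnet we may assume that $f_{\alpha}^{-1}$ converges pointwise to a map $h\colon X\to X$, again distance-preserving. Then for every $x\in X$ and every $\alpha$, using that $f_{\alpha}$ is a bijective isometry,
\[
d\bigl(g(h(x)),x\bigr)\leq d\bigl(g(h(x)),f_{\alpha}(h(x))\bigr)+d\bigl(h(x),f_{\alpha}^{-1}(x)\bigr),
\]
and both terms on the right tend to $0$ along the net (the first because $f_{\alpha}\to g$ pointwise, the second because $f_{\alpha}^{-1}\to h$ pointwise); hence $g\circ h=\Id$, and symmetrically $h\circ g=\Id$, so $g$ is a bijective isometry. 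I expect this surjectivity to be the only delicate point: a pointwise limit of surjective isometries of a proper metric space need not itself be surjective — a distance-preserving self-map of a proper space may fail to be onto, for instance $n\mapsto n+1$ on $\N$ — and it is exactly the compactness of the closed balls that allows one to extract a limiting ``inverse'' $h$ and rule this out. (Alternatively, since a proper metric space is separable, one may run the same argument with sequences in place of nets.)
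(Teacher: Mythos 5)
Your argument is correct. Note, however, that the paper does not prove this statement at all: it is quoted from \cite[Corollary 5.6]{G-K}, with a proof also attributed to \cite[Theorem 3.2]{A-M-N}, so there is no internal proof to match. What you give is the standard self-contained argument, and it is in the same spirit as the two proofs the paper does carry out nearby: the proof of Theorem \ref{D-W}, where everything is likewise reduced to relative compactness of a set of the form $\set{f : d(f(x),y)\le 2r}$, and the proof of Theorem \ref{moi}, which uses exactly your Tychonoff device (the orbit bound $d(f(x),x_0)\le d(x,x_0)+R$ playing the role of the finite orbits $G_x y$) and also has to argue separately that a pointwise limit is surjective. Your reduction to compactness of $E_R$ is sound (the evaluation at $x_0$ is continuous for the pointwise topology, which by the Bourbaki references recalled in the paper coincides with the compact-open topology on $\Iso(X,d)$, and the compact-sets criterion for properness is the one the paper states); the closedness of $E_R$ in the product $P$ is the only delicate point, and your trick of extracting a pointwise limit $h$ of the inverses $f_\alpha^{-1}$ (legitimate since $E_R$ is inverse-stable and $P$ is compact) to get $g\circ h=\Id$ correctly disposes of the surjectivity issue, which, as your example $n\mapsto n+1$ on $\N$ shows, is genuinely where properness of $(X,d)$ enters. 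The remark that one may use sequences instead of nets because a proper metric space is $\sigma$-compact, hence separable, is also accurate. In short: a correct proof, filling in details the paper delegates to the literature, by essentially the classical van Dantzig--van der Waerden method.
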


This theorem applies in particular to the group of automorphisms of a locally finite graph $X$ equipped with its geodesic distance.

We shall also need the following result, that we have not met in the literature. Below, $X$ is a discrete space. We denote by ${\rm Map}(X)$ the space of maps from $X$ to $X$ endowed with the topology of pointwise convergence.
If $\delta$ is the usual discrete metric, note that $\Iso(X,\delta)$ is nothing else than the group ${\rm Bij}(X)$ of bijections from $X$ to $X$. It is not closed in ${\rm Map}(X)$ when the cardinal of $X$ is infinite.

\begin{thm}\label{moi} Let $G$ be a group acting (to the left) on a  space $X$. Let $\rho$ be the corresponding homomorphism from $G$ into ${\rm Bij}(X)$ and denote by
$G'$ the closure of $\rho(G)$ in ${\rm Map}(X)$. Then $G'$ is a subgroup of ${\rm Bij}(X)$ acting properly on the discrete space $X$ if and only if the orbits of all the stabilizers of
the $G$-action on $X$ are finite. Moreover, in this case the group $G'$ is locally compact and totally disconnected.
\end{thm}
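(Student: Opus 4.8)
The plan is to analyze when the closure $G' = \overline{\rho(G)}$ in ${\rm Map}(X)$ is contained in ${\rm Bij}(X)$, and then to identify properness of the $G'$-action with the finiteness-of-stabilizer-orbits condition. A natural first observation: since $X$ is discrete, ${\rm Map}(X) = X^X$ with the product topology, and an element $\varphi \in {\rm Map}(X)$ lies in $G'$ precisely when for every finite $F \subseteq X$ there is $g \in G$ with $\rho(g)|_F = \varphi|_F$. One direction of the equivalence is the convenient one. First I would show that if some stabilizer $G_x$ has an infinite orbit, say $G_x y$ is infinite, then $G'$ fails to act properly on $X$ — indeed the set $\{\varphi \in G' : \varphi(x) = x,\ \varphi(y) \in G_x y\}$ will be shown to be non-relatively-compact while moving the compact (= finite) sets $\{x,y\}$ around only finitely, contradicting the compact-set characterization of properness recalled in the Preliminaries. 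More precisely, $\{\varphi \in G' : \varphi(\{x\}) \cap \{x\} \neq \emptyset\} \supseteq \overline{\{\rho(g): gx = x\}}$, and the orbit of $y$ under this set is infinite, so by a standard argument this closure cannot be compact in ${\rm Map}(X)$ (a compact subset of $X^X$ has finite orbits on each point since each coordinate projection has compact, hence finite, image).

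For the converse, assume every stabilizer $G_x$ has all orbits finite. I would first check that $G' \subseteq {\rm Bij}(X)$: a pointwise limit $\varphi$ of bijections is always injective on any pair once we also control the "inverse" — here the finiteness hypothesis is exactly what lets one show $\varphi$ is surjective (and that $\varphi^{-1}$, defined coordinatewise, is again a pointwise limit of the $\rho(g)^{-1}$, hence lies in $G'$). The key point is that for fixed $x$, the $G$-orbit of $x$ decomposes and the stabilizer acts with finite orbits, which forces the relevant coordinate images to be finite. Then I would verify properness via the local criterion: given $x, y \in X$, take $V_x = \{x\}$, $V_y = \{y\}$, and show $E := \{\varphi \in G' : \varphi(x) = y\}$ is relatively compact in $G'$. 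Since $G' \subseteq X^X$, relative compactness amounts (by Tychonoff) to showing that for each $z \in X$ the set $\{\varphi(z) : \varphi \in E\}$ is finite. Writing $\varphi = \lim \rho(g_n)$ with $g_n x \to y$ eventually constant, one reduces $\varphi(z)$ to the action of elements carrying $x$ to $y$; modulo a fixed such element this becomes the action of the stabilizer $G_y$ on $z$, whose orbit is finite by hypothesis — giving the required finiteness.

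Finally, for local compactness and total disconnectedness: $G'$ is closed in $X^X$; by the properness just established together with Theorem \ref{G-K} (applied to $(X,\delta)$, which is a proper metric space only if... — actually here one argues directly) $G'$ is locally compact. More directly: the stabilizer $G'_x = \{\varphi \in G' : \varphi(x) = x\}$ is a closed subset of the compact set obtained by bounding finitely many coordinates (the argument of the converse shows $G'_x$ has finite orbits on each point, hence is a closed subset of a product of finite sets, hence compact), so $G'$ has a compact open subgroup, giving local compactness; and a topological group with a neighbourhood basis at $e$ of compact open subgroups — here $\{G'_F : F \subseteq X \text{ finite}\}$ with $G'_F = \bigcap_{x\in F} G'_x$, each compact open — is totally disconnected.

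I expect the main obstacle to be the converse direction, specifically the careful bookkeeping that shows $G' \subseteq {\rm Bij}(X)$ and that the coordinate-image sets $\{\varphi(z) : \varphi \in E\}$ are genuinely finite: one must pass from "$g_n x \to y$" (eventual constancy on $\{x\}$) to control of $g_n z$ using only one fixed group element and a stabilizer orbit, and handle the case where $z$ is in a different $G$-orbit from $x$. The surjectivity of elements of $G'$ is the subtle point — it is false without the hypothesis (the shift on $\N$) — and isolating exactly where finiteness of stabilizer orbits enters the surjectivity argument is the delicate step.
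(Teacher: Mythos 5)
Your plan is architecturally the same as the paper's proof, and most of it is sound: the necessity direction (a relatively compact subset of ${\rm Map}(X)$ has finite coordinate images, so a proper action forces stabilizer orbits to be finite), the properness verification on singletons using the observation that $\set{\varphi\in G':\varphi(x)=y}$ is controlled, modulo one fixed group element, by a stabilizer orbit, and the local compactness and total disconnectedness via the compact open subgroups $G'_F=\bigcap_{x\in F}G'_x$, which form a basis of neighbourhoods of the identity. (The paper gets properness slightly more cheaply: once the stabilizers $G'_x$ are compact and $X$ is discrete, the action is proper; your direct check is fine.)

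The genuine gap is exactly the step you flag and then postpone: the proof that every $f\in G'$ is surjective, i.e.\ that $G'\subset{\rm Bij}(X)$ at all. Saying that ``the finiteness hypothesis is exactly what lets one show $\varphi$ is surjective'' and that $\varphi^{-1}$, ``defined coordinatewise,'' is a pointwise limit of the $\rho(g)^{-1}$ is circular as written: you cannot define $\varphi^{-1}$ before knowing bijectivity, and you neither show that the net of inverses has a convergent subnet nor that a limit would be a two-sided inverse. (Injectivity, by contrast, needs no control of inverses: it is automatic for pointwise limits of injections on a discrete space.) The missing idea, as the paper implements it, is short: given $f=\lim_i\rho(g_i)$, fix $x$ and an index $i_0$ with $g_{i_0}x=f(x)$, set $g=g_{i_0}$ and $f_1=g^{-1}f$; then $f_1$ is a pointwise limit of elements of the stabilizer $G_x$, hence $f_1(G_xy)\subset G_xy$ for every $y$; since $G_xy$ is finite and $f_1$ is injective, $f_1$ maps $G_xy$ onto itself, so every $y$ lies in the range of $f_1$, and $f=gf_1$ is bijective. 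Your hinted route can be salvaged by the same conjugation: from $g^{-1}g_i\in G_x$ one gets $g_i^{-1}z\in G_x(g^{-1}z)$, a finite set, so a subnet of the $\rho(g_i)^{-1}$ converges pointwise to some $\psi$ with $f\circ\psi=\mathrm{id}$, giving surjectivity. But as the proposal stands, this central step of the ``if'' direction is asserted, not proved.
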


\begin{proof} Obviously we may replace $G$ by $\rho(G)$, that is, we may assume that $G$
is a subgroup of ${\rm Bij}(X)$.

The condition on the orbits is of course necessary. Assume now that it is fulfilled. 
We first check that $G'$ is contained into ${\rm Bij}(X)$. Obviously,  the elements of $G'$ are injective maps, since $X$ is discrete. We now take $f\in G'$
and show that $f$ is surjective. Let $(g_i)$ be a net in $G$ such that $f = \lim_i g_i$  in the topology of the pointwise convergence. We fix $x\in X$ and $i_0$
such that $f(x) = g_i x$ for $i\geq i_0$. We set $g = g_{i_0}$. So $h_i = g^{-1}g_i$ is in the stabilizer $G_x$ of $x$ whenever $i\geq i_0$.
Observe that $\lim_i h_i = g^{-1}f$. In particular, $f_1 =  g^{-1}f$ is an injective map
and for $y\in X$, $h\in G_x$, we have $f_1(h y)=(g^{-1}f)(hy) = \lim_i (h_ih) y \subset G_x y$. Hence, $f_1(G_x y)\subset G_x y$. Since
$G_x y$ is a finite set, it follows that $y$ is in the range of $f_1$. Thus, we see that $f_1\in {\rm Bij(X)}$, and $f\in {\rm Bij(X)}$ too.

Now, we want to show that $G'$ is a locally compact group. For $x\in X$, we set $G'_{x} = \set{g'\in G' : g'(x) = x}$. It is easily checked that 
$G'_{x}$ is the closure of $G_x$. Let us prove that $G_x$ is relatively compact. This follows from the fact that $G_x$
is a subset of $\Pi_{y\in X} G_x y$, which is compact, by the Tychonov theorem.  Since the finite intersections of such stabilizers $G'_{x}$
form a basis of neighbourhoods of the identity in $G'$, we see that $G'$ is a locally compact group. Note that the
$G'_{x}$ are both compact and open, and so $G'$ is totally disconnected.

Finally, since $X$ is discrete and the $G'$-action has compact stabilizers, we see that the $G'$-action is proper.
\end{proof}

\section{Proper $G$-invariant compatible metrics on coset spaces}
Let $X$ be a topological space. A {\it compatible metric} on $X$ is a distance which defines the topology. 

\begin{defn} Let $G$ be a topological group, acting continuously to the left on a topological space $X$. A {\it compatible $G$-invariant metric} on $X$
is a compatible distance $d$ such that $d(gx,gy) = d(x,y)$ for every $g\in G$ and $x,y\in X$.
\end{defn}

Let us remind, for reference purpose,  the theorem of Struble mentioned in the introduction.

\begin{thm}[\cite{Struble}]\label{Strub} A locally compact group $G$ has a proper (left) $G$-invariant compatible metric if and only if it is second countable.
\end{thm}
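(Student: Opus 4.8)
The plan is to handle the two directions of the equivalence separately. \textbf{Necessity} is immediate: if $d$ is a proper left $G$-invariant compatible metric, then $G=\bigcup_{n\ge1}B(e,n)$ writes $G$ as a countable union of compact sets, so $G$ is $\sigma$-compact, while compatibility of $d$ makes $G$ metrizable hence first countable; thus $G$ is second countable by the characterization of second countability for locally compact groups recalled in the Preliminaries.

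For \textbf{sufficiency}, with $G$ second countable and locally compact, I would build a length function $\ell\colon G\to[0,\infty)$ — i.e. $\ell(e)=0$, $\ell(g^{-1})=\ell(g)$, $\ell(gh)\le\ell(g)+\ell(h)$ — enjoying three properties: $\ell(g)>0$ for $g\ne e$; the sets $\{\ell<\varepsilon\}$ form a basis of neighbourhoods of $e$; and $\{\ell\le R\}$ is compact for every $R\ge 0$. Granting this, $d(g,h)\defequal\ell(g^{-1}h)$ is left-invariant by construction, is a metric since $\ell$ is a length function positive off $e$, is compatible since left translations are homeomorphisms and the balls $\{\ell<\varepsilon\}$ form a neighbourhood basis at $e$, and is proper since $B(g,R)=g\cdot\{\ell\le R\}$ is compact. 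To get $\ell$, I would feed the classical Birkhoff--Kakutani metrization construction (see \cite[\S 1.22]{M-Z}) a \emph{bi-infinite} chain of compact symmetric neighbourhoods $(V_n)_{n\in\Z}$ of $e$ such that $V_{n+1}^{3}\subseteq V_n$ for every $n$, $\bigcup_n V_n=G$, and $(V_n)_{n\ge1}$ is a neighbourhood basis at $e$. The members with $n\ge1$ are produced in the usual way from local compactness and first countability; setting $V_0\defequal V_1^{3}$ and, after writing $G=\bigcup_{m\ge1}C_m$ with $C_m$ compact, symmetric and increasing, defining recursively $V_{-m}\defequal V_{-(m-1)}^{3}\cup C_m$ for $m\ge1$, one gets compact members for $n\le0$ which together exhaust $G$; and $\bigcap_n V_n=\{e\}$ because $G$ is Hausdorff.

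Applying the Birkhoff--Kakutani lemma to this chain — its proof only uses the local relations $V_{n+1}^{3}\subseteq V_n$, so it is unaffected by letting the index run over $\Z$ — yields $\ell(x)=\inf\{\,\sum_{i=1}^{k}2^{-n_i}:x=s_1\cdots s_k,\ s_i\in V_{n_i}\,\}$, a length function with $\ell(x)\le 2^{-n}$ whenever $x\in V_n$ and $\ell(x)\ge 2^{-n-1}$ whenever $x\notin V_n$. These two bounds deliver everything: for $n\ge1$ one has $\{\ell<2^{-n-1}\}\subseteq V_n\subseteq\{\ell\le 2^{-n}\}$, so the $\ell$-balls at $e$ form a neighbourhood basis; $\ell(x)=0$ forces $x\in\bigcap_n V_n=\{e\}$, so $\ell$ is positive off $e$; and given $R\ge0$, choosing $n$ negative enough that $2^{-n-1}>R$ yields $\{\ell\le R\}\subseteq V_n$, which is compact. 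I expect the main obstacle to be precisely the nontrivial half of the Birkhoff--Kakutani lemma, the estimate $\ell(x)\ge 2^{-n-1}$ for $x\notin V_n$ that prevents the infimum over factorizations from collapsing; it is proved by induction on the number of factors, using the cubing condition to absorb short factors into their neighbours, and it is the same combinatorial fact underlying the Birkhoff--Kakutani metrizability theorem mentioned in the introduction — the only genuinely new point being that the large-index members of our chain are chosen compact, which is what encodes properness of $d$.
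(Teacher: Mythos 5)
Your proposal is correct, but note that the paper itself offers no proof of this statement: Theorem \ref{Strub} is quoted ``for reference purpose'' with a citation to \cite{Struble}, so there is no internal argument to compare against. What you supply is essentially the standard (and essentially Struble's own) construction, the same device that underlies the proper length functions of \cite{H-P}: extend the Birkhoff--Kakutani chain to a bi-infinite family $(V_n)_{n\in\Z}$ of compact symmetric neighbourhoods with $V_{n+1}^3\subseteq V_n$, with the positive-index members a neighbourhood basis at $e$ (giving compatibility and positivity, since $\bigcap_{n\ge1}V_n=\{e\}$) and the negative-index members exhausting the $\sigma$-compact group (giving properness, since $\{\ell\le R\}\subseteq V_n$ for $n$ sufficiently negative and is closed in the common topology, hence compact). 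Your key estimate $\ell(x)\ge 2^{-n-1}$ for $x\notin V_n$ is exactly the classical telescoping lemma, whose induction indeed uses only the local relations $V_{m+1}^3\subseteq V_m$ (together with the resulting nesting $V_{m+1}\subseteq V_m$), so letting the index run over $\Z$ costs nothing; the necessity direction via $\sigma$-compactness plus first countability matches the characterization of second countability recalled in the paper's preliminaries. In short, the argument is complete and is the expected route; it buys the reader a self-contained proof where the paper simply defers to the literature.
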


We now consider a topological group $G$ and a closed subgroup $H$. Whenever $G$ is metrizable, the coset space $G/H$ is
still metrizable \cite[\S 1.23]{M-Z}. However, there is not always a $G$-invariant compatible metric, as shown by the following example.

\begin{ex}\label{ex:neg} Let $G = \R^+_{*}\ltimes \R$ and $H = \R^+_{*}$ viewed as a closed subgroup of $G$. Then $(a,b)\mapsto b$ is a homeomorphism
from $G/H$ onto $\R$. Under this identification, the left action of $a\in H$ on $x\in \R$ is the product in $\R$. If $d$ is a $G$-invariant metric on $G/H = \R$, we have
$d(0,x) = d(0,ax)$ for every $a\in H$ and $x\in \R$. If follows that $d$ cannot define the topology of $\R$.
\end{ex}

In case $H$ is a compact subgroup of a  metrizable topological group $G$,  there is still a $G$-invariant metric on $G/H$.
Moreover, if $G$ is in addition generated by a compact neighbourhood  of the identity (for instance if $G$ is locally compact, metrizable and connected)
it has been proved by Kristensen \cite{Kris} that there is even  a proper $G$-invariant compatible metric on $G/H$.

We first show that, using Theorem \ref{Strub}, it is not difficult to extend the above fact  to any locally compact second countable group.

\begin{prop}\label{compact} Let $G$ be a locally compact second countable group and $H$ a compact subgroup. Then there exists a proper $G$-invariant compatible
metric on $G/H$.
\end{prop}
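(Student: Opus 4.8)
The plan is to lift everything to $G$, invoke Struble's theorem (Theorem~\ref{Strub}) there, and then transport the resulting metric down to $G/H$ using compactness of $H$. Since $G$ is locally compact and second countable, Theorem~\ref{Strub} supplies a proper left-$G$-invariant compatible metric $d_{0}$ on $G$. Because $H$ is compact, every coset $gH$ is a nonempty compact subset of $(G,d_{0})$, so I would set
\[
\rho(gH,g'H)=\max\Bigl(\,\sup_{x\in gH}\inf_{y\in g'H}d_{0}(x,y)\ ,\ \sup_{y\in g'H}\inf_{x\in gH}d_{0}(x,y)\,\Bigr),
\]
the Hausdorff distance attached to $d_{0}$. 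This depends only on the cosets as subsets of $G$, hence is well defined on $G/H$; it is finite because cosets are $d_{0}$-bounded; it is left-$G$-invariant because left translations are $d_{0}$-isometries; and it is an honest metric because the Hausdorff distance is a metric on the nonempty compact subsets of a metric space (in particular $\rho(gH,g'H)=0$ forces $gH=g'H$). A pleasant feature is that no right-invariance of $d_{0}$ is required.

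Next I would check that $\rho$ induces the quotient topology on $G/H$. Writing $\pi\colon G\to G/H$ for the quotient map, the composite $\rho\circ(\pi\times\pi)$ is continuous on $G\times G$: it is obtained from the (jointly) continuous function $(g,g',h,h')\mapsto d_{0}(gh,g'h')$ by taking an infimum and then a supremum over the \emph{compact} group $H$, operations that preserve joint continuity. Since $\pi$ is open, $\pi\times\pi$ is an open continuous surjection, hence a quotient map onto $G/H\times G/H$, so $\rho$ itself is continuous for the quotient topology; in particular every $\rho$-ball is quotient-open. Conversely, suppose $\pi^{-1}(W)$ is open and $gH\in W$. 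Then the compact set $gH$ lies in the open set $\pi^{-1}(W)$, so by the metric-space tube lemma some neighbourhood $\{x\in G:\inf_{y\in gH}d_{0}(x,y)<\varepsilon\}$ is still contained in $\pi^{-1}(W)$. If now $\rho(gH,g'H)<\varepsilon$, then in particular the point $g'$ lies in this neighbourhood, so $g'H=\pi(g')\in W$; thus the $\rho$-ball of radius $\varepsilon$ about $gH$ is contained in $W$, and $W$ is $\rho$-open. Hence the two topologies coincide and $\rho$ is compatible.

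Finally I would verify properness. If $\rho(eH,gH)\leq r$, then choosing $x=e$ in the first supremum (whose index set is $eH=H$) gives a point $y\in gH$ with $d_{0}(e,y)\leq r$; since $d_{0}$ is proper the closed ball $B_{d_{0}}(e,r)$ is compact, and $g\in B_{d_{0}}(e,r)H$. Therefore the closed $\rho$-ball of radius $r$ about $eH$ is a closed subset of the compact set $\pi\bigl(B_{d_{0}}(e,r)H\bigr)$, hence compact; translating by elements of $G$, which act on $(G/H,\rho)$ by isometries and transitively, shows that all closed $\rho$-balls are compact. This would complete the proof.

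As for the main obstacle: once one has the idea to apply Struble's theorem on $G$ and form Hausdorff distances of cosets, the argument is essentially formal, the only step demanding care being the identification of the $\rho$-topology with the quotient topology — and there the two places that genuinely use compactness of $H$ are the continuity of $\rho\circ(\pi\times\pi)$ (an infimum/supremum over a compact index set) and the fact that a coset, being compact, sits inside each open set containing it together with an $\varepsilon$-neighbourhood. If instead one tried the apparently simpler set-distance $\inf_{h,h'}d_{0}(gh,g'h')$, the triangle inequality would fail without first making $d_{0}$ right-$H$-invariant (by averaging over $H$), which is why the Hausdorff version is the cleaner route.
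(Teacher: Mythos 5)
Your proposal is correct and follows essentially the same route as the paper: apply Struble's theorem to get a proper left-invariant metric on $G$ and push it down to $G/H$ via the Hausdorff distance between cosets, then check invariance, compatibility, and properness. The only differences are cosmetic (you verify compatibility via joint continuity of the Hausdorff distance and the openness of $\pi\times\pi$, and you bound the ball $B(r)$ inside $\pi\bigl(B_{d_0}(e,r)H\bigr)$ rather than inside the image of the ball of radius $c+r$), so the argument matches the paper's proof in substance.
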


\begin{proof} Let $d$ be a proper left $G$-invariant compatible metric on $G$. We set $\dot{g} = gH$ and define on $G/H$ the Hausdorff distance
\begin{align*}
\tilde{d}(\dot{g_1}, \dot{g_2}) &=  d(g_1H,g_2 H)\\
&= \max(\sup_{h_1\in H}\inf_{h_2\in H} d(g_1h_1,g_2h_2), \sup_{h_2\in H}\inf_{h_1\in H} d(g_1h_1,g_2h_2)).
\end{align*}

This distance is obviously $G$-invariant. Let us show that it is compatible. First, using the fact that $H$ is compact, given $\varepsilon >0$, there is a neighbourhood $W$
of the unit $e$ such that $d(h,gh) < \varepsilon$ for $h\in H$ and $g\in W$. Therefore, for $g\in W$, we have
$$\sup_{h_1\in H}\inf_{h_2\in H} d(h_1,gh_2) \leq \sup_{h\in H}d(h,gh) \leq \varepsilon,$$
and similarly $\sup_{h_2\in H}\inf_{h_1\in H} d(h_1,gh_2)\leq \varepsilon$. It follows that $\dot{g} \mapsto \tilde{d}(\dot{e},\dot{g})$
is continu\-ous (for the quotient topology) at $\dot e$. Since $\tilde d$ is invariant, we deduce that its open balls are open for the quotient topology.
Assuming for the moment that the balls $B(r) = \set{\dot{g}: \tilde{d}(\dot{e},\dot{g})\leq r}$ are compact for the quotient topology, let us show
that every open neighbourhood $V$ of $\dot e$ for the quotient topology contains an open ball. This is immediate since $\cap_n B(1/n)\cap \big((G/H)\setminus V\big) = \emptyset$.
Then we conclude that $\tilde d$ is compatible.

It remains to check that $B(r)$ is compact for $r>0$. Let $g\in G$ such that $\dot g \in B(r)$. For every $h_2\in H$ there exists $h_1\in H$
with $d(h_1,gh_2)\leq r$. Then we have
$$d(e,gh_2)\leq d(e,h_1)+d(h_1,gh_2)\leq c+r$$
where $c= \sup_{h\in H}d(e,h)$. It follows that $B(r)$ is contained in the image by the quotient map of the closed ball in $G$ with radius $c+r$, centered in $e$,
which is compact since $d$ is proper.
\end{proof}

\begin{rem}\label{rem:utile} Let $G$ be a topological group, $H$ a closed subgroup of $G$ and $L$  a normal closed subgroup  of $G$, contained in $H$.
Using the canonical homeomorphism from $G/H$ onto $(G/L)/(H/L)$, we  see that there exists
a proper $G$-invariant compatible metric on $G/H$ if and only if there exists a proper $G/L$-invariant compatible metric on $(G/L)/(H/L)$. 
In particular, in the previous proposition, it is enough to assume that $H$ contains a closed normal subgroup $L$ of $G$ such that $H/L$ is compact.

An interesting particular case is $L = \cap_{g\in G}gHg^{-1}$, the largest normal subgroup of $G$ contained in $H$, since the action of $G/L$ onto $(G/L)/(H/L)$ is effective.
\end{rem}

In the rest of this section $H$ is a closed subgroup of a locally compact group $G$ and $\rho$ denotes the natural homomorphism from $G$ into the group of
homeomorphisms of $G/H$. Together with the previous proposition, the following lemma will provide a characterization of the coset spaces which carry a proper $G$-invariant compatible metric.

\begin{lem}\label{proper} Let $G$ be a second countable locally compact group and $H$ a closed subgroup of $G$. We assume that there is a compatible $G$-invariant metric $d$ on $X= G/H$ and a  locally compact subgroup $G'$ of $\Iso(G/H,d)$, which contains $\rho(G)$ and is such that its action on
$G/H$ is proper. Then  
 there exists  a compact subgroup $H'$ of $G'$ such that
\begin{itemize}
\item[(a)] $\rho(H)\subset H'$;
\item[(b)] the map $\tilde \rho : G/H\to G'/H'$ induced by $\rho$ is a homeomorphism.
\end{itemize}
\end{lem}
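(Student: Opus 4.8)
The plan is to take for $H'$ the stabilizer of the base point in $G'$ and to check that, once this candidate is fixed, everything is essentially formal. Write $o=\dot e=eH\in X$ and set
$$H'=G'_{o}=\set{g'\in G': g'(o)=o}.$$
First I would prove that $H'$ is compact. It is closed in $G'$, being the preimage of the closed set $\set{o}$ under the continuous orbit map $g'\mapsto g'(o)$; and, applying the properness of the $G'$-action to the pair of compact subsets $A=B=\set{o}$ of $X$, the set $H'=\set{g'\in G': g'A\cap B\neq\emptyset}$ is relatively compact in $G'$. Being closed and relatively compact, $H'$ is compact. Assertion (a) is then immediate, since for $h\in H$ one has $\rho(h)(o)=hH=eH=o$, so $\rho(h)\in H'$; in particular the homomorphism $g\mapsto\rho(g)H'$ from $G$ to $G'/H'$ factors through the quotient map $G\to G/H$ and defines the map $\tilde\rho$ of the statement, $\tilde\rho(\dot g)=\rho(g)H'$.

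Next I would check that $\tilde\rho$ is a continuous bijection. For continuity, recall that the topology of $\Iso(X,d)$, hence of $G'$, is the topology of pointwise convergence; thus $\rho\colon G\to G'$ is continuous as soon as $g\mapsto\rho(g)(x)=gx$ is continuous for each $x\in X$, which holds because the $G$-action on $X$ is continuous. Composing with the canonical projection $G'\to G'/H'$ and using that $G/H$ carries the quotient topology yields continuity of $\tilde\rho$. For injectivity, $\tilde\rho(\dot g_1)=\tilde\rho(\dot g_2)$ means $\rho(g_2^{-1}g_1)\in G'_{o}$, i.e. $g_2^{-1}g_1\in H$. For surjectivity, since $\rho(G)\subset G'$ and $G$ acts transitively on $G/H$, the group $G'$ also acts transitively on $X$; given $g'\in G'$, choose $g\in G$ with $g(o)=g'(o)$, so that $\rho(g)^{-1}g'\in G'_{o}=H'$ and therefore $g'H'=\rho(g)H'=\tilde\rho(\dot g)$.

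For (b) it then remains to see that $\tilde\rho^{-1}$ is continuous, and this is the one step deserving a little care. Its inverse is the map $\psi\colon G'/H'\to X$, $\psi(g'H')=g'(o)$, which is well defined precisely because $H'$ is the stabilizer of $o$ and satisfies $\psi(\tilde\rho(\dot g))=\rho(g)(o)=\dot g$. Writing $q'\colon G'\to G'/H'$ for the canonical quotient map and $\pi'\colon G'\to X$, $\pi'(g')=g'(o)$, for the orbit map, we have $\psi\circ q'=\pi'$; since $\pi'$ is continuous (the $G'$-action on $X$ is continuous) and $q'$ is a quotient map, $\psi$ is continuous, so $\tilde\rho$ is a homeomorphism. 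I do not expect a serious obstacle here: once one guesses that $H'$ should be $G'_{o}$, its compactness is exactly the translation of properness of the $G'$-action, and (a)--(b) are a matter of unwinding definitions --- the only subtlety being to deduce the continuity of $\tilde\rho^{-1}$ from the fact that $q'$ is a quotient map rather than attempting a direct argument.
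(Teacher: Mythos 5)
Your proposal is correct, and it chooses the same $H'$ as the paper, namely the stabilizer of $\dot e$ in $G'$; but the justification of (b) follows a different route. The paper's proof is a one-liner resting on the general theorem that a transitive \emph{proper} action of $G'$ on the locally compact space $X$ makes the orbit map $g'\mapsto g'\dot e$ induce a homeomorphism $\theta: G'/H'\to X$, after which $\tilde\rho=\theta^{-1}$. You avoid that theorem entirely: properness is used only to get compactness of $H'$ (via $A=B=\set{\dot e}$), and the homeomorphism is obtained formally, exploiting that $X$ is itself the coset space $G/H$ --- continuity of $\tilde\rho$ comes from continuity of $\rho$ into the pointwise-convergence topology of $\Iso(X,d)$ plus the quotient topology on $G/H$, while continuity of the inverse $\psi(g'H')=g'(\dot e)$ comes from the universal property of the quotient map $G'\to G'/H'$. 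This is more elementary and self-contained: the usual difficulty for transitive actions (openness of the orbit map, normally handled by properness, $\sigma$-compactness or Baire-category arguments) simply does not arise because both directions are induced maps between quotients. Two small wording points: the map $g\mapsto \rho(g)H'$ is not a homomorphism (the target is not a group), it is just a map that factors through $G/H$ because $\rho(H)\subset H'$; and in the surjectivity step you should write $\rho(g)(\dot e)=g'(\dot e)$ rather than $g(\dot e)$, which is what you clearly intend.
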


\begin{proof}
  We denote by $H'$ the stabilizer of $\dot e$ for the action of $G'$ on $ X=G/H$.
  The action of $G'$ on $X$ is transitive and proper: it follows that $g' \mapsto g'\dot e$ induces a homeomorphism $\theta$ from $G'/H'$ onto $X$.
 We immediately check that $\tilde \rho = \theta^{-1}$.
 \end{proof}

\begin{prop}\label{CNS} Let $G$ be a second countable locally compact group 
and $H$ a closed subgroup of $G$. The following conditions are equivalent:
\begin{itemize}
\item[(i)] There exists a proper $G$-invariant compatible metric on $G/H$.
\item[(ii)] There exists a continuous homomorphism $\varphi$ from $G$ into a metrizable locally compact group $G'$ and a compact subgroup $H'$ of $G'$ such that
\begin{itemize}
\item[(a)] $\varphi(H)\subset H'$;
\item[(b)] the map $\tilde \varphi : G/H\to G'/H'$ induced by $\varphi$ is a homeomorphism.
\end{itemize}
\end{itemize}
\end{prop}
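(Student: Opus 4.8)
The plan is to deduce $(i)\Rightarrow(ii)$ from Theorem~\ref{G-K} together with Lemma~\ref{proper}, and $(ii)\Rightarrow(i)$ from Proposition~\ref{compact}.

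For $(i)\Rightarrow(ii)$, suppose $d$ is a proper $G$-invariant compatible metric on $X=G/H$ and put $G'=\Iso(X,d)$. Since $d$ is $G$-invariant we have $\rho(G)\subset G'$, and since the $G$-action on $X$ is continuous and $X$ is locally compact the induced map $\rho\colon G\to G'$ is continuous (recall that on $\Iso(X,d)$ the compact-open topology agrees with pointwise convergence). As $(X,d)$ is proper, Theorem~\ref{G-K} asserts that $G'$ is locally compact and acts properly on $X$; hence Lemma~\ref{proper} applies with this $G'$, providing a compact subgroup $H'\subset G'$ with $\rho(H)\subset H'$ and $\tilde\rho\colon G/H\to G'/H'$ a homeomorphism. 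Taking $\varphi=\rho$, conditions (a) and (b) are satisfied, so the only thing left to verify is that $G'$ is metrizable.

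To see that $G'=\Iso(X,d)$ is metrizable, first note that $X=G/H$ is second countable: it is metrizable by \cite[\S 1.23]{M-Z}, locally compact as the image of $G$ under the open continuous quotient map, and $\sigma$-compact as a continuous image of the $\sigma$-compact group $G$; hence it is second countable by the criterion recalled in the preliminaries. Fix a countable dense subset $D\subset X$. An isometry of $X$ is determined by its restriction to $D$, and a net of isometries converges pointwise on $X$ if and only if it converges pointwise on $D$ (isometries form an equicontinuous family); thus restriction to $D$ embeds $\Iso(X,d)$ homeomorphically into $X^{D}$, which is second countable. Therefore $G'$ is second countable, and in particular metrizable.

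For $(ii)\Rightarrow(i)$, let $\varphi\colon G\to G'$ and $H'$ be as in (ii). Surjectivity of $\tilde\varphi$ forces $G'=\varphi(G)H'$; writing the $\sigma$-compact set $\varphi(G)$ as $\bigcup_n\varphi(K_n)$ with each $K_n\subset G$ compact, we get $G'=\bigcup_n\varphi(K_n)H'$, which exhibits $G'$ as $\sigma$-compact. Being also metrizable and locally compact, $G'$ is second countable, so Proposition~\ref{compact} provides a proper $G'$-invariant compatible metric $d'$ on $G'/H'$. Transport it along the homeomorphism $\tilde\varphi$, i.e. set $d(gH,g_1H)=d'(\varphi(g)H',\varphi(g_1)H')$: this is a compatible metric on $G/H$, its balls are compact since $\tilde\varphi$ carries them onto the compact balls of $d'$, and it is $G$-invariant because $\varphi$ is a homomorphism and $d'$ is $G'$-invariant. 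This is the required metric, and the equivalence follows. The proof is essentially an assembly of the three quoted results; the two places that need care are the metrizability of $\Iso(G/H,d)$ in the first implication — handled via second countability of $G/H$ and the embedding into $X^{D}$ — and, in the second implication, the observation that surjectivity of $\tilde\varphi$ forces $G'=\varphi(G)H'$, which is precisely what upgrades the bare hypothesis ``$G'$ metrizable locally compact'' to ``$G'$ second countable'' so that Proposition~\ref{compact} is applicable.
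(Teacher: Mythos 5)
Your proof is correct and follows essentially the same route as the paper: Theorem~\ref{G-K} plus Lemma~\ref{proper} for (i)$\Rightarrow$(ii), and $\sigma$-compactness of $G'$ plus Proposition~\ref{compact} with transport of the metric along $\tilde\varphi$ for (ii)$\Rightarrow$(i). The only difference is that you spell out the metrizability of $\Iso(G/H,d)$ (via the embedding into $X^{D}$), which the paper dispatches in one line by invoking $\sigma$-compactness of $(X,d)$; your added detail is accurate.
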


\begin{proof} (ii) $\Rightarrow$ (i). Assume that (ii) holds. Since $G'/H'$ is $\sigma$-compact and $H'$ is compact, we see that $G'$ is $\sigma$-compact.
 Being metrizable, it is second countable. Then, applying Proposition \ref{compact}, we see that there exists a proper $G'$-invariant compatible metric on $G'/H'$.
It gives a metric with the required properties on $G/H$.

(i) $\Rightarrow$ (ii). Let $d$ be a proper $G$-invariant compatible distance on $X=G/H$ and let $G'=\Iso(X,d)$ be the group of isometries of $X$,
endowed with the compact-open topology. We take $\varphi = \rho$. Since the metric is proper, by Theorem \ref{G-K},
we know that $G'$ is a locally compact group which acts properly on $X$. Then we apply the previous lemma.
Moreover, $G'$ is metrizable since $(X,d)$ is $\sigma$-compact. 
\end{proof}

\begin{rem}\label{rem:} (a) We observe that in the above statement we may assume that $G'$ is a locally compact group of homeomorphisms of $G/H$ and $\varphi = \rho$. We may also replace $G'$ by the closure of $\varphi(G)$, and thus assume that $\varphi(G)$ is dense in $G'$.

(b) The fact that $\tilde \varphi$ is bijective is equivalent to $G' = \varphi(G) H'$ together with $H= \varphi^{-1}(H')$.

(c) When the action of $G$ over $G/H$ is effective, we may take $\varphi$   to be injective.
\end{rem}

We deduce the following consequence when $G/H$ is connected.

\begin{prop}\label{prop:connected} Let $G$ be a second countable locally compact group and $H$ a closed subgroup of $G$. Assume that $G/H$ is connected. The two following conditions are
equivalent:
\begin{itemize}
\item[(i)] there exists a $G$-invariant compatible metric on $G/H$;
\item[(ii)] there exists a proper $G$-invariant compatible metric on $G/H$.
\end{itemize}
\end{prop}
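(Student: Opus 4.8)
The plan is to dispatch the easy implication and then reduce the substantive one to results already in hand. The implication (ii) $\Rightarrow$ (i) is immediate, since a proper $G$-invariant compatible metric is in particular a $G$-invariant compatible metric. So the work is entirely in (i) $\Rightarrow$ (ii).

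Assume (i): let $d$ be a $G$-invariant compatible metric on $X = G/H$. The key point is that the Dantzig--van der Waerden theorem applies to $(X,d)$. Indeed, since $G$ is locally compact, the coset space $X=G/H$ is locally compact, and by hypothesis it is connected; thus $(X,d)$ is a locally compact connected metric space, and Theorem \ref{D-W} tells us that $G' := \Iso(X,d)$, with the compact-open topology, is locally compact, second countable, and acts properly on $X$. Moreover, because $d$ is $G$-invariant, the natural homomorphism $\rho$ from $G$ into the group of homeomorphisms of $G/H$ actually takes values in $G' = \Iso(X,d)$. Hence $G'$ is a locally compact subgroup of $\Iso(X,d)$ which contains $\rho(G)$ and acts properly on $X$.

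At this stage Lemma \ref{proper} applies verbatim (with this $G'$): it yields a compact subgroup $H'$ of $G'$ such that $\rho(H)\subset H'$ and the induced map $\tilde\rho\colon G/H\to G'/H'$ is a homeomorphism. Since $G'$ is locally compact and second countable, it is metrizable. Taking $\varphi=\rho$, the data $(G',H')$ therefore satisfies condition (ii) of Proposition \ref{CNS}, and that proposition gives a proper $G$-invariant compatible metric on $G/H$, which is exactly condition (ii) of the present statement.

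I do not expect a real obstacle here: the argument is just an orchestration of Theorem \ref{D-W}, Lemma \ref{proper}, and Proposition \ref{CNS}. The only points deserving a sentence of care are the verification that the hypotheses of Theorem \ref{D-W} genuinely hold --- namely that $G/H$ is locally compact (a consequence of local compactness of $G$) and that $G$ acts on $(X,d)$ by isometries (which is precisely the $G$-invariance of $d$) --- together with the observation that a second countable locally compact group is metrizable, so that $G'$ fits the framework of Proposition \ref{CNS}.
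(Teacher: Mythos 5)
Your proof is correct and follows exactly the paper's route: apply the Dantzig--van der Waerden theorem (Theorem \ref{D-W}) to $(G/H,d)$, then conclude via Lemma \ref{proper} and Proposition \ref{CNS}. The extra sentences checking the hypotheses of Theorem \ref{D-W} and the metrizability of $G'$ are welcome but do not change the argument.
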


\begin{proof} Let $d$ be a $G$-invariant compatible metric on $G/H$. It follows from Theorem \ref{D-W} that  the group $\Iso(G/H,d)$ is locally compact, second countable, and acts properly on $G/H$. Then we apply Lemma \ref{proper} and Proposition \ref{CNS}.
\end{proof}

In general, unless there is an obvious candidate for $(G', H', \varphi)$, Proposition \ref{CNS} is not easy to apply. However, we shall see that it can be used to find obstructions for
the existence of a proper $G$-invariant compatible metric on $G/H$. We need first to introduce or recall some definitions.

\begin{defn}\label{def:an} Let $H$ be a closed subgroup of a locally compact group $G$. We say that $H$ is {\it topologically almost normal} (resp. {\it almost normal}) in $G$ if for every compact
(resp. finite) subset $K$
of $G$ there exists a compact (resp. finite) subset $K'$ of $G$ such that $HK\subset K' H$. 
\end{defn}

In other terms, $H$ is  topologically almost normal  if and only if the $H$-orbits of compact  subsets of $G/H$
are relatively compact. It is almost normal if and only if the $H$-orbits of elements of $G/H$ are finite.
 Note that compact or co-compact subgroups are topologically almost normal, and that normal subgroups are of course both topologically almost normal
 and almost normal. Whenever $H$ is an open subgroup of $G$, then $H$ is almost normal if and only if it is topologically almost normal. 
 
 \begin{ex}\label{ex:alm_nor} Let $G$ be a locally compact group acting continuously by isometries on a locally finite metric space ({\it e.g.} a discrete group of automorphisms of a locally finite graph) and let $H$
 be the stabilizer of some element $x\in X$. Then $H$ is an open almost normal subgroup of $G$ (see Theorem \ref{thm:discrete}).
  
 Classical exemples of almost normal subgroups are plentiful: $SL_n(\Z)$ in $SL_n(\Q)$, $\Z \rtimes \set{1}$ in $\Q\rtimes \Q^{+}_*$,
 $\langle x\rangle$ in $BS(m,n) = \langle t,x : t^{-1}x^m t = x^n\rangle$, ....
 \end{ex}
 Recall the a group $G$ is {\it maximally almost periodic}  if there exists a continuous injective homomorphism from $G$ into a compact group.

In the next statement, we take $L = \ker\rho = \cap_{g\in G} gHg^{-1}$, where $\rho$ is the canonical homomorphism from $G$ into the group of homeomorphisms of $G/H$
(so that the $G/L$-action on $(G/L)/(H/L)$ is effective).

\begin{prop}\label{prop:CN} Let $G$ be a second countable locally compact group and $H$ a closed subgroup of $G$. We assume the the existence of a proper $G$-invariant compatible metric on $G/H$. Then
\begin{itemize}
\item[(i)] there exists a non-zero relatively $G$-invariant (positive Radon) measure on $G/H$;
\item[(ii)] $H/L$ is maximally almost periodic;
\item[(iii)] $H$ is topologically almost normal.
\end{itemize}
\end{prop}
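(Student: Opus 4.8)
The plan is to deduce all three necessary conditions from Proposition \ref{CNS} together with Remarks \ref{rem:utile} and \ref{rem:}. First I would apply Proposition \ref{CNS}: a proper $G$-invariant compatible metric on $G/H$ provides a continuous homomorphism $\varphi\colon G\to G'$ into a metrizable locally compact group $G'$ and a compact subgroup $H'\subset G'$ with $\varphi(H)\subset H'$, such that the induced map $\tilde\varphi\colon G/H\to G'/H'$, $gH\mapsto\varphi(g)H'$, is a homeomorphism. Note $\tilde\varphi$ is equivariant, $\tilde\varphi(\rho(g)x)=\varphi(g)\cdot\tilde\varphi(x)$, where $G$ acts on $G'/H'$ through $\varphi$; this, plus the compactness of $H'$, is the only input I will use.

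For (iii), I would use the characterisation recalled after Definition \ref{def:an}: $H$ is topologically almost normal if and only if the $H$-orbit of every compact subset of $G/H$ is relatively compact. Given a compact $C\subset G/H$, the set $\tilde\varphi(C)$ is compact in $G'/H'$ and, by equivariance, $\tilde\varphi(\rho(H)C)=\varphi(H)\cdot\tilde\varphi(C)\subset H'\cdot\tilde\varphi(C)$. Since $H'$ is compact, $H'\cdot\tilde\varphi(C)$ is the image of the compact set $H'\times\tilde\varphi(C)$ under the continuous action map, hence compact; applying the homeomorphism $\tilde\varphi^{-1}$ shows $\rho(H)C$ is relatively compact.

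For (i), since $H'$ is compact one has $\Delta_{H'}\equiv 1$, so by Weil's theory of homogeneous spaces $G'/H'$ carries a non-zero relatively $G'$-invariant positive Radon measure $\nu$ (with multiplier $\Delta_{G'}$). Transporting $\nu$ to $G/H$ by $\tilde\varphi$ yields a non-zero positive Radon measure $\mu$, and equivariance shows $\rho(g)_*\mu$ is proportional to $\mu$ with ratio $\Delta_{G'}(\varphi(g))^{\pm1}$; hence $\mu$ is relatively $G$-invariant with continuous multiplier $\Delta_{G'}\circ\varphi\colon G\to\R_*^+$. For (ii), I would first reduce to the effective case: by Remark \ref{rem:utile}, with $L=\ker\rho=\cap_{g\in G}gHg^{-1}$, the space $(G/L)/(H/L)$ carries a proper $G/L$-invariant compatible metric, where $G/L$ is second countable locally compact, $H/L$ is closed, and the $G/L$-action is effective. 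Applying Proposition \ref{CNS} to $(G/L,H/L)$ together with Remark \ref{rem:}(c), the homomorphism in condition (ii) can be taken injective; restricting it to $H/L$ gives a continuous injective homomorphism of $H/L$ into a compact group, i.e. $H/L$ is maximally almost periodic.

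All of this is short once Proposition \ref{CNS} is in hand, so I do not anticipate a real obstacle; the points needing a little care are that the multiplier $\Delta_{G'}\circ\varphi$ in (i) is a bona fide continuous homomorphism $G\to\R_*^+$ (it is, as a composite of such), that pushing a relatively invariant measure forward along an equivariant homeomorphism preserves relative invariance, and that in (ii) the two Remarks are genuinely being applied to a second countable locally compact group.
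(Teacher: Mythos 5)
Your proposal is correct, and for (i) and (ii) it is essentially the paper's proof: apply Proposition \ref{CNS}, quote the existence of a non-zero relatively $G'$-invariant measure on $G'/H'$ (the paper cites Bourbaki where you invoke Weil's theory --- same fact, since $H'$ is compact) and transport it by the equivariant homeomorphism $\tilde\varphi$; then reduce to the effective action of $G/L$ and use Remark \ref{rem:}(c) to embed $H/L$ continuously and injectively into the compact group $H'$. Your extra care about the multiplier $\Delta_{G'}\circ\varphi$ and about the reduction via Remark \ref{rem:utile} only makes explicit what the paper leaves implicit. The one point where you diverge is (iii): you derive it from Proposition \ref{CNS}, pulling back $H'\cdot\tilde\varphi(C)$ through the homeomorphism $\tilde\varphi$, which is valid; the paper instead gives a one-line direct argument that does not use \ref{CNS} at all --- since $H$ fixes $\dot e$ and $d$ is $G$-invariant, for any compact $K\subset G/H$ one has $HK\subset B(\dot e,r)$ with $r=\sup_{k\in K}d(\dot e,k)$, and this ball is compact because $d$ is proper. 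The paper's version is more elementary and shows that (iii) needs only the proper invariant metric itself, not the structural consequence (ii) of Proposition \ref{CNS}; your version costs nothing extra given that you already invoked \ref{CNS} for (i) and (ii).
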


\begin{proof} (i) is a consequence of Proposition \ref{CNS}, since there is a non-zero relatively $G'$-invariant measure on $G'/H'$ (see \cite[Corollaire 1, page 59]{Bou1306}).
Again by Proposition \ref{CNS}, we get a continuous injective homomorphism from $H/L$ into $H'$ and so $H/L$ is maximally almost periodic.
Finally, (iii) is obvious: let $d$ be a proper $G$-invariant compatible metric, $K$ a compact subset of $G/H$ and set $r = \sup_{k\in K}d(\dot e,k)$; then
$HK$ is contained in the ball of radius $r$ and center $\dot e$.
\end{proof}

Consider for instance a real connected semi-simple, non compact, Lie group $G$ and let $G = KAN$ be a Iwasawa decomposition of $G$. Then, there does not exist
a compatible $G$-invariant metric on $G/AN$. Indeed, $G/AN$ does not carry any relatively $G$-invariant (hence invariant) non-zero measure.

\begin{prop}\label{prop:map} Let $G$ be a connected locally compact group and $H$ a closed co-compact subgroup such that the the left action of $G$ over $G/H$ is effective. A necessary condition for the existence of a compatible $G$-invariant  metric on $G/H$ is that $G$ is isomorphic to a group of the form $K\times \R^n$ where $K$ is a compact group.
\end{prop}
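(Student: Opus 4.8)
The plan is to extract from the hypotheses that $G$ is maximally almost periodic, and then to invoke the classical structure theorem of Freudenthal and Weil for connected locally compact maximally almost periodic groups.

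First I would record that $X := G/H$ is a \emph{compact} metric space: it is compact because $H$ is co-compact in $G$, and it carries the compatible $G$-invariant metric $d$ by hypothesis; in particular $(X,d)$ is a proper metric space, since the closed balls are closed subsets of the compact space $X$. The key point is then that $\Iso(X,d)$ is a \emph{compact} group. Indeed, every isometry of $X$ is $1$-Lipschitz, so $\Iso(X,d)$ is an equicontinuous family of self-maps of the compact space $X$; by the Arzel\`a--Ascoli theorem its closure in $C(X,X)$, for the topology of uniform convergence (which on $\Iso(X,d)$ coincides with the compact-open topology, hence, by the facts recalled earlier, with pointwise convergence), is compact. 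Moreover $\Iso(X,d)$ is closed in $C(X,X)$, because a uniform limit of isometries of $X$ is distance-preserving, and a distance-preserving self-map of a compact metric space is automatically bijective, hence an isometry. Thus $\Iso(X,d)$ is compact.

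Next, effectiveness of the $G$-action on $X$ says precisely that the canonical homomorphism $\rho\colon G\to\Iso(X,d)$ is injective, and it is continuous because the action $G\times X\to X$ is continuous and $\Iso(X,d)$ carries the topology of pointwise convergence. Hence $\rho$ is a continuous injective homomorphism of $G$ into a compact group, so $G$ is maximally almost periodic. It then remains only to apply the Freudenthal--Weil theorem, which asserts that a connected locally compact maximally almost periodic group is isomorphic to $K\times\R^{n}$ with $K$ compact (necessarily connected here, since $G$ is connected) and $n\ge 0$; this is exactly the asserted conclusion.

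The only step carrying any content is the compactness of $\Iso(X,d)$ — equivalently, the reduction of the whole question to the maximal almost periodicity of $G$; once that is in hand the conclusion is just the Freudenthal--Weil theorem, and there is nothing further of substance to prove. The two minor points to handle with care are the coincidence of the pointwise, compact-open and uniform topologies on $\Iso(X,d)$ (valid because $X$ is compact, and essentially already recorded) and the surjectivity of distance-preserving self-maps of a compact metric space. I would also remark that, unlike in several earlier results, no second-countability assumption on $G$ is needed, since the compact metric space $X$ is automatically second countable and the Freudenthal--Weil theorem holds for arbitrary connected locally compact groups.
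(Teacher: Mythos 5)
Your proof is correct and follows essentially the same route as the paper: effectiveness yields a continuous injective homomorphism of $G$ into the compact isometry group of the compact metric space $G/H$, and the Freudenthal--Weil theorem concludes. The only difference is that you establish the compactness of $\Iso(G/H,d)$ directly via Arzel\`a--Ascoli (with the standard fact that a distance-preserving self-map of a compact metric space is surjective) instead of quoting Proposition \ref{CNS}; this is sound, and it has the minor advantage of not relying on the second-countability hypothesis of Proposition \ref{CNS}, which the statement of Proposition \ref{prop:map} does not assume.
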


\begin{proof} Assume that there is a $G$-invariant compatible metric on $G/H$. Since $G/H$ is compact, we see, with the notation of Proposition \ref{CNS}, that $G'$ is compact. Moreover we may assume that $\varphi : G\to G'$ is injective (see Remark \ref{rem:} (c)). Hence $G$ is maximally almost periodic and therefore of the form $K\times \R^n$ by a theorem of Freudenthal-Weil (see \cite[Th\'eor\`eme 16.4.6]{Dix}).
\end{proof}

\begin{ex}\label{ex:ex} Let $G$ be a connected locally compact second countable group and $H$ a discrete co-compact subgroup. We assume that $G$ has no discrete subgroup $N$ in its center  such that $G/N$ is maximally almost periodic.  Then, there is no $G$-invariant compatible metric on $G/H$. Otherwise, arguing as in the proof of the previous proposition, there will be a continuous homomorphism $\rho$ from $G$ into a compact group $G'$, whose kernel $L$ is
 a discrete normal subgroup of $G$, so contained in its center, because $G$ is connected. Moreover, $G/L$ is maximally almost periodic, since its embeds in $G'$, hence a contradiction.
 
 Let us specify the above example to show that the conditions (i), (ii) and (iii) of Proposition \ref{prop:CN} are not sufficient to insure the existence of a proper $G$-invariant compatible metric. Let $G= SO_0(2,3)$ be the connected component of the group of matrices in $SL(p+q,\R)$ leaving invariant the quadratic form 
$x_1^{2}+x_2^{2} - x_3^{2} -x_4^{2}-x_5^{2}$ and let $H$ be a co-compact lattice in $G$. Then $H$ is topologically almost normal, residually finite (by a theorem of Malcev, since it is a  finitely generated and linear group),
and there exists an invariant probability measure on $G/H$. However, as explained above, $G/H$ does not carry a $G$-invariant compatible metric.
\end{ex}

Non-trivial examples of closed subgroups of $G$ such that $G/H$ is connected and has a $G$-invariant compatible metric seem to be rather scarce. 
In case the subgroup $H$ is open in $G$ (for instance when $G$ is a discrete group), the situation is quite different. Of course, in this case Condition (i) of Proposition \ref{prop:CN}
is always satisfied. We shall see that Condition (iii) suffices to imply the existence of a proper $G$-invariant compatible metric on $G/H$.

\begin{thm}\label{thm:discrete} Let $G$ be a second countable locally compact group and $H$ a closed subgroup. The following conditions are equivalent:
\begin{itemize}
\item[(i)] $H$ is open and there exists a proper $G$-invariant compatible metric $d$ on $G/H$;
\item[(ii)] $H$ is a stabilizer for a continuous action of $G$ by isometries on a locally finite (discrete) metric space;
\item[(iii)] $H$ is an open almost normal subgroup of $G$.
\end{itemize}
\end{thm}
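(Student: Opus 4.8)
The plan is to prove the cyclic chain of implications (i) $\Rightarrow$ (ii) $\Rightarrow$ (iii) $\Rightarrow$ (i); two of the three links are essentially formal and the weight of the theorem lies in (iii) $\Rightarrow$ (i). For \emph{(i) $\Rightarrow$ (ii)}: since $H$ is open, $G/H$ carries the discrete topology, so a \emph{compatible} metric $d$ on $G/H$ is automatically discrete, and its being proper means exactly that its closed balls are finite; hence $(G/H,d)$ is a locally finite metric space. The canonical $G$-action on $G/H$ is continuous, is by isometries because $d$ is $G$-invariant, and its stabilizer of the base point $\dot e=eH$ is precisely $H$. This gives (ii).

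For \emph{(ii) $\Rightarrow$ (iii)}: suppose $H=G_x$ is the stabilizer of a point $x$ for a continuous action of $G$ by isometries on a locally finite metric space $(Y,d_Y)$. Since the orbit map $g\mapsto gx$ is continuous and $Y$ is discrete, $H$ is the preimage of the open singleton $\{x\}$ and hence open. For almost normality I would use the characterization recalled just after Definition \ref{def:an}: it suffices that every $H$-orbit in $G/H$ be finite. The map $gH\mapsto gx$ is an $H$-equivariant bijection of $G/H$ onto the orbit $Gx$, so I only need each $Hgx$ finite; but for $h\in H$ one has $d_Y(hgx,x)=d_Y(hgx,hx)=d_Y(gx,x)$, so $Hgx$ is contained in the finite ball of radius $d_Y(gx,x)$ about $x$. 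Thus $H$ is open and almost normal.

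The substantive implication is \emph{(iii) $\Rightarrow$ (i)}. Assume $H$ open and almost normal, and set $X=G/H$; since $H$ is open, $X$ is discrete, and since $G$ is second countable, $X$ is countable. I would feed the $G$-action on $X$ into Theorem \ref{moi}, whose hypothesis is that the orbits of all stabilizers are finite: for the translation action the stabilizer of $gH$ is $gHg^{-1}$, and its orbit of $g'H$ has the same cardinality as the $H$-orbit of $g^{-1}g'H$ (left translation by $g$ being a bijection of $G/H$), which is finite by almost normality and the characterization after Definition \ref{def:an}. Theorem \ref{moi} then produces $G':=\overline{\rho(G)}\subset{\rm Map}(X)$, a locally compact totally disconnected subgroup of ${\rm Bij}(X)$ acting properly on the discrete space $X$, with the stabilizer $H':=G'_{\dot e}$ compact and open. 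Since $X$ is countable, ${\rm Map}(X)$ is metrizable, hence so is $G'$; and since $G'$ acts transitively on $X$ with compact stabilizer $H'$, it is $\sigma$-compact, hence second countable (first countable plus $\sigma$-compact, as recalled in the preliminaries). The orbit map induces a $G$-equivariant homeomorphism $G'/H'\cong X$, where $G$ acts through $\rho$. Finally Proposition \ref{compact} applied to the pair $(G',H')$ yields a proper $G'$-invariant compatible metric on $G'/H'$, which transported to $X=G/H$ is $G$-invariant (each $\rho(g)$ being an isometry), compatible, and proper; equivalently this last step is just a verification of condition (ii) of Proposition \ref{CNS} with $\varphi=\rho$.

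The step I expect to be the main obstacle is precisely (iii) $\Rightarrow$ (i): one must correctly translate almost normality of $H$ into the hypothesis of Theorem \ref{moi} for \emph{all} stabilizers of the action (in particular passing, if necessary, from $G$ to $\rho(G)\cong G/\ker\rho$ to obtain an effective action), and then check that the group $G'$ produced by Theorem \ref{moi} is metrizable and second countable, so that Proposition \ref{compact} — equivalently Proposition \ref{CNS} — actually applies. Everything else is routine.
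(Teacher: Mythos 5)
Your proposal is correct and follows essentially the same route as the paper: the cycle (i) $\Rightarrow$ (ii) $\Rightarrow$ (iii) $\Rightarrow$ (i), with the ball argument for (ii) $\Rightarrow$ (iii) and, for (iii) $\Rightarrow$ (i), feeding the finiteness of stabilizer orbits into Theorem \ref{moi} and then concluding via the orbit-map homeomorphism (Lemma \ref{proper}) and Proposition \ref{CNS}/Proposition \ref{compact}. Your extra verifications (metrizability and $\sigma$-compactness of $G'$) are details the paper leaves implicit, not a genuine difference in approach.
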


\begin{proof}
 (i) $\Rightarrow$ (ii) is obvious. 
 
 (ii) $\Rightarrow$ (iii). Let $G\actson (X,d)$ be a continuous action on a proper discrete metric space  such that $H$
is the stabilizer of some $x_0\in X$. Denote by $\psi$ the map $g\mapsto gx_0$. Let $k\in G$ and set $r = d(x_0,kx_0)$. Then
$\psi(Hk)$ is contained in the ball of radius $r$ and center $x_0$, which is finite. Therefore we have $\psi(Hk) = \set{g_1 x_0,\dots,g_n x_0}$ and so $Hk = \cup_{i=1}^n g_i H$.
Moreover $H$ is open since $X$ is discrete.

(iii) $\Rightarrow$ (i). We assume that $H$ is open and almost normal. So the orbits of the stabilizers of the $G$ action on $G/H$ are finite.
By Theorem  \ref{moi}, there exists a subgroup $G'$ of $\Iso(G/H,\delta)$, where $\delta$ is the usual discrete metric, which contains $\rho(G)$ and acts properly on $G/H$.
Then the conclusion follows from Lemma \ref{proper} and Proposition \ref{CNS}.
\end{proof}

\section{Proper $G$-invariant compatible metrics on $G$-spaces}
We now consider a locally compact group $G$ acting on a locally compact space $X$ and we are interested in the existence of  $G$-invariant compatible metrics on $X$.
In the lecture notes \cite{Kos}, the following result is proved:

\begin{thm} {\rm (\cite[Theorem 3, page 9]{Kos})} Let $G$ be a locally compact group acting properly on a second countable locally compact space $X$. 
Then there exists a  $G$-invariant compatible metric on $X$.
\end{thm}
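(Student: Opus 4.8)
The plan is to reduce this statement to the Abels–Manoussos–Noskov theorem quoted in the introduction, namely that a locally compact group acting properly on a second countable locally compact space admits a proper $G$-invariant compatible metric. The only gap is that the theorem as cited from \cite{Kos} does not ask for the metric to be proper, so strictly speaking it is weaker; but since we are allowed to invoke the A-M-N result (it appears earlier in the text, in the discussion preceding Section 1 and implicitly through \cite{A-M-N}), the cleanest route is to deduce the weaker statement from it. Thus the whole content is: under a proper action on a second countable locally compact $X$, there is a $G$-invariant compatible metric.

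First I would recall that properness of the action forces $G$ to be locally compact (this is noted in Section 1), and that a proper action on a second countable space forces $X/G$ to be nice, but more to the point, by the A-M-N theorem there is a proper $G$-invariant compatible metric $d$ on $X$. A proper metric is in particular a metric, so $d$ is already a $G$-invariant compatible metric on $X$, which is exactly the conclusion. Hence the statement is an immediate weakening of A-M-N, and the proof is one line once that result is granted.

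If instead one wanted a self-contained argument not appealing to A-M-N (which is the spirit of \cite{Kos}), I would proceed by the classical averaging/Kakutani-type construction: choose a countable basis $(U_n)$ of relatively compact open sets for $X$; for each $n$ build, using paracompactness and the second-countability of $X$, a bounded continuous function $f_n$ adapted to $U_n$; symmetrize over $G$ by integrating $g \mapsto f_n(g^{-1}x)$ against a suitable measure — here properness of the action is exactly what makes these integrals converge and what keeps the resulting pseudometrics finite and continuous; then assemble the pseudometrics $d_n$ obtained from the $f_n$ into a single metric $d = \sum_n 2^{-n}\min(d_n,1)$, which is automatically $G$-invariant by construction. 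One checks compatibility by verifying that the $d_n$ separate points and generate the topology, using that the $U_n$ form a basis.

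The main obstacle is entirely contained in the averaging step: one must produce, for each pair of points $x \neq y$, a continuous $G$-invariant pseudometric separating them, and the finiteness and continuity of the averaged quantity rests on controlling the set $\set{g \in G : gV_x \cap V_y \neq \emptyset}$, which is relatively compact precisely by properness. Getting continuity of $x \mapsto d(x,y)$ uniformly enough to conclude that the $d$-topology coincides with the original topology (rather than being merely finer or coarser) is the delicate point; the second-countability hypothesis is what lets one do this with a countable family and then diagonalize. Given that the excerpt explicitly quotes the deep A-M-N theorem, however, I would in practice simply cite it and be done.
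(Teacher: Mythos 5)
Note first that the paper does not prove this statement at all: it is quoted verbatim from Koszul's lecture notes \cite[Theorem 3, page 9]{Kos}, and its only role in the text is as the precursor of the stronger Theorem \ref{thm:AMN} of Abels--Manoussos--Noskov, which is likewise cited without proof. Your main argument --- a proper $G$-invariant compatible metric is in particular a $G$-invariant compatible metric, so the statement is an immediate weakening of Theorem \ref{thm:AMN} --- is logically valid within the paper's own framework, but it is a different (and in a sense inverted) route: the paper presents \cite{A-M-N} as an \emph{improvement} of Koszul's theorem, obtained by what the introduction calls an arduous proof, so deducing Koszul from it buys nothing mathematically and would even be circular if the argument of \cite{A-M-N} used Koszul's metrization theorem as an ingredient, a point your proposal does not check. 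Your fallback sketch is the honest, self-contained route and is indeed the spirit of Koszul's original argument: construct continuous $G$-invariant pseudometrics by averaging, with properness guaranteeing that sets of the form $\set{g\in G: gV_x\cap V_y\neq\emptyset}$ are relatively compact so the averages make sense, then assemble a countable family into $d=\sum_n 2^{-n}\min(d_n,1)$ using second countability. As written, however, this is a programme rather than a proof: the actual construction of the separating pseudometrics and, above all, the verification that the topology of $d$ coincides with (and is not merely comparable to) the original topology --- which you yourself identify as the delicate point --- are not carried out. In short, as a citation-level reduction your proposal is correct and matches how the paper itself treats the statement (by reference rather than by argument), but the self-contained version would need the averaging step completed before it could stand as a proof.
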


Recently, this result has been improved in \cite{A-M-N}.

\begin{thm}[\cite{A-M-N}]\label{thm:AMN} Let $G$ be a locally compact group acting properly on a second countable locally compact topological space $X$.
Then there is a proper $G$-invariant compatible metric on $X$.
\end{thm}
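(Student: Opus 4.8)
The plan is to refine the construction behind the theorem of Koszul recalled just above (existence of a $G$-invariant compatible metric \cite{Kos}), so as to keep control of the balls. We may assume $X\neq\emptyset$; fixing $x_{0}\in X$, properness forces the stabilizer $G_{x_{0}}$ to be compact and the orbit map to identify $Gx_{0}$ with the second countable space $G/G_{x_{0}}$, whence, pulling compact sets back through the proper map $G\to G/G_{x_{0}}$, the group $G$ is $\sigma$-compact; likewise $X$ is $\sigma$-compact, and, the action being proper, $X/G$ is a locally compact, second countable, Hausdorff space, hence metrizable, paracompact, and carrying a \emph{proper} compatible metric $\bar\delta$. A first reduction lets us assume $G$ itself second countable: by the Kakutani--Kodaira theorem choose a compact normal subgroup $N\trianglelefteq G$ with $G/N$ second countable; the induced action of $G/N$ on the (again second countable and locally compact) space $X/N$ is still proper, so, granting the theorem in the second countable case, there is a proper $G/N$-invariant compatible metric $\bar d$ on $X/N$; then, with $q_{N}\colon X\to X/N$ the quotient map and $d_{0}$ a $G$-invariant compatible metric on $X$ furnished by Koszul's theorem, the metric $d_{0}+q_{N}^{*}\bar d$ is $G$-invariant, compatible, and proper -- the last point because, $N$ being compact, each of its balls is contained in the preimage under $q_{N}$ of a compact set, hence relatively compact.

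So assume $G$ second countable. The next ingredient is a slice (tube) theorem for proper actions: every point of $X$ has a $G$-invariant open neighbourhood of the form $U\cong G\times_{K}S$, where $K$ is a compact stabilizer and $S$ a relatively compact $K$-invariant slice. Using second countability and paracompactness of $X/G$ I would extract a countable such cover $\set{U_{n}}$, $U_{n}\cong G\times_{K_{n}}S_{n}$, whose images in $X/G$ are locally finite, together with a $G$-invariant partition of unity $\set{\phi_{n}}$ subordinate to it. On one tube a proper $G$-invariant compatible metric $d_{n}$ can be built as follows: starting from a proper left $G$-invariant compatible metric $d_{G}$ on $G$ (Theorem \ref{Strub}), put $d_{G,n}(g,g')=\sup_{k\in K_{n}}d_{G}(gk,g'k)$; this stays proper and left $G$-invariant, becomes right $K_{n}$-invariant, and remains compatible, since $d_{G}\le d_{G,n}$ while $d_{G,n}(e,g)=\sup_{k\in K_{n}}d_{G}(e,k^{-1}gk)\to 0$ as $g\to e$ by compactness of $K_{n}$. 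Averaging over $K_{n}$ a compatible metric on $S_{n}$ (automatically proper, as $S_{n}$ is relatively compact) yields a $K_{n}$-invariant one $\bar\sigma_{n}$; then the metric $((g,s),(g',s'))\mapsto d_{G,n}(g,g')+\bar\sigma_{n}(s,s')$ on $G\times S_{n}$ is invariant under left translation by $G$ and under the twisted action $k\cdot(g,s)=(gk^{-1},ks)$ of $K_{n}$, so its quotient distance is a $G$-invariant compatible metric on $G\times_{K_{n}}S_{n}=U_{n}$, and it is proper because $d_{G,n}$ is proper and $S_{n}$ relatively compact.

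The gluing is the classical step of Koszul's proof: one forms $d$ as a chained infimum, over finite chains $x=y_{0},\dots,y_{m}=y$ whose consecutive points lie in a common tube, of the sums of local terms $\max(\phi_{n}(y_{j-1}),\phi_{n}(y_{j}))\,d_{n}(y_{j-1},y_{j})$ (one term per $j$, the index $n$ chosen among the tubes containing $y_{j-1}$ and $y_{j}$ so as to minimise it). The chained infimum yields the triangle inequality, $G$-invariance is inherited from all the ingredients, and local finiteness together with $\sum_{n}\phi_{n}=1$ give compatibility; $d$ is finite since the nerve of the cover is connected whenever $X/G$ is, the general case reducing to this on the at most countably many clopen $G$-invariant pieces of $X$. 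Finally I would set $D=d+q^{*}\bar\delta$, with $q\colon X\to X/G$ the quotient map and $\bar\delta$ the proper compatible metric on $X/G$ fixed above; $D$ is manifestly $G$-invariant and compatible, and everything comes down to its properness. Since $D\ge q^{*}\bar\delta$, the ball $B_{D}(x_{0},r)$ is contained in $q^{-1}(\bar K)$ for the compact set $\bar K=\overline{B_{\bar\delta}(\dot x_{0},r)}\subset X/G$, and $\bar K$ meets the images of only finitely many tubes; a $D$-short chain issued from $x_{0}$ stays over $\bar K$, i.e.\ inside those finitely many tubes, and on each of them the $G$-coordinate is controlled by the proper metric $d_{G,n}$, while the transition map between two such tubes -- being continuous in a point of a relatively compact slice -- displaces the $G$-coordinate by a bounded amount only; such a chain therefore accumulates a bounded total $G$-displacement and remains inside a set of the form $\set{g\in G:d_{G}(e,g)\le R(r)}\cdot(\text{relatively compact slice pieces over }\bar K)$, which is relatively compact. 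Hence $B_{D}(x_{0},r)$ is relatively compact.

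I expect the genuine difficulties to lie in two places. First, the slice theorem in the generality actually needed -- a proper action of an arbitrary (possibly totally disconnected, non-Lie) locally compact group on a second countable locally compact space -- is itself a substantial input and the conceptual core of the matter. Second, and this is where the length of \cite{A-M-N} comes from, the last paragraph must be turned into honest estimates: one has to control how the partition-of-unity weights $\phi_{n}$, which may become arbitrarily small, interact with displacement -- so that a chain cannot drift far at negligible cost -- which calls for a careful Lebesgue-number argument for the tube cover over each compact $\bar K\subset X/G$ and for uniform bounds on the transition functions. Keeping the glued metric proper while it is forced to interpolate, along the orbits, with the Struble metric of $G$ and, transversally, with the slice metrics and the metric of $X/G$, is precisely the arduous part.
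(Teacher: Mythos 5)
This theorem is quoted in the paper from \cite{A-M-N} without proof, so there is no internal argument to compare yours with; judged on its own, your proposal is a plausible strategy sketch but not a proof, and the gaps lie exactly where you suspect them. The decisive one is the tube/slice theorem: the decomposition $U\cong G\times_{K}S$ around every point is Palais' theorem for proper actions of \emph{Lie} groups; for an arbitrary locally compact group (compact non-Lie or totally disconnected stabilizers, no approximation argument supplied) no slice theorem in this generality can simply be cited, and orbit maps of proper actions need not admit local sections at this level of generality. Since your entire construction -- the local metrics $d_{n}$, the invariant partition of unity, the control of the $G$-coordinate inside a tube -- lives on these tubes, the argument does not get off the ground without that input; the published proof in \cite{A-M-N} proceeds quite differently, building $G$-invariant proper pseudometrics directly from compact subsets by a Birkhoff--Kakutani/Struble-type chain construction, precisely because slices are unavailable.

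The second gap is the properness of the glued chain metric. Your final estimate asserts that a $D$-short chain lying over the compact set $\bar K\subset X/G$ ``accumulates a bounded total $G$-displacement'', but this is the whole difficulty: the weights $\phi_{n}$ may be arbitrarily small near the edge of a tube, so a chain can make many moves of large $d_{n}$-size at arbitrarily small $D$-cost, and the claim that transition maps between overlapping tubes displace the $G$-coordinate by a uniformly bounded amount is not justified (it amounts to a uniform comparison of the local trivializations over $\bar K$, which is not automatic). By contrast, your preliminary steps are sound: the reduction to second countable $G$ via Kakutani--Kodaira combined with Koszul's theorem \cite{Kos} and the pullback of a proper metric from $X/N$, and the tube-level constructions ($d_{G,n}$, the averaged slice metric, the quotient metric on $G\times_{K_{n}}S_{n}$) are correct as far as they go. So the proposal correctly identifies, but does not close, the two essential difficulties.
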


Of course, the fact that the action is proper is not necessary to obtain the conclusion.

\begin{prop} Let $G$ be a group acting on a countable set $X$. The following conditions are equivalent:
\begin{itemize}
\item[(i)] there exists a $G$-invariant locally finite metric $d$ on $X$;
\item[(ii)] the orbits of all the stabilizers of the $G$-action are finite.
\end{itemize}
\end{prop}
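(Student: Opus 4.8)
The plan is to prove the two implications separately, using Theorem \ref{moi} as the key tool in one direction and a direct construction in the other.

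For (i) $\Rightarrow$ (ii): Suppose $d$ is a $G$-invariant locally finite metric on the countable set $X$. Since $d$ is locally finite, every closed ball $B(x,r)$ is finite. Let $x_0 \in X$ and let $H = G_{x_0}$ be its stabilizer. I would fix any $y \in X$ and set $r = d(x_0, y)$. For $h \in H$ we have $d(x_0, hy) = d(h^{-1}x_0, y) = d(x_0, y) = r$, so the orbit $Hy$ is contained in the finite ball $B(x_0, r)$ and is therefore finite. The same argument applies to the stabilizer of any point (replacing $x_0$ by that point), so all stabilizer orbits are finite. This direction is routine.

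For (ii) $\Rightarrow$ (i): Assume the orbits of all stabilizers of the $G$-action on $X$ are finite. Let $\rho : G \to {\rm Bij}(X)$ be the associated homomorphism and $G'$ the closure of $\rho(G)$ in ${\rm Map}(X)$. By Theorem \ref{moi}, $G'$ is a subgroup of ${\rm Bij}(X)$, it is locally compact and totally disconnected, and it acts properly on the discrete space $X$. Now $G'$ acts properly on the locally compact space $X$ (discrete, hence locally compact; and countable, hence second countable). By Theorem \ref{thm:AMN} applied to $G'$, there exists a proper $G'$-invariant compatible metric $d$ on $X$. Since $X$ is discrete, "compatible" forces $d$ to induce the discrete topology, and "proper" means the balls are compact, hence finite; so $d$ is locally finite. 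Finally, $d$ is $G'$-invariant, and $\rho(G) \subset G'$, so $d$ is $G$-invariant in the sense that $d(gx, gy) = d(\rho(g)x, \rho(g)y) = d(x,y)$. This gives (i).

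The main obstacle, or rather the point requiring care, is the verification that $G'$ really satisfies the hypotheses of Theorem \ref{thm:AMN}: one must check that $X$ as a $G'$-space is second countable and locally compact (immediate, since it is a countable discrete space) and that the $G'$-action is proper (this is exactly the content of the last line of Theorem \ref{moi}, via the fact that the stabilizers $G'_x$ are compact and $X$ is discrete). One should also note that invoking the deep Theorem \ref{thm:AMN} is somewhat of an overkill here, since for a totally disconnected locally compact group acting properly on a discrete space one could build the metric more directly; but since Theorem \ref{thm:AMN} is available, the cleanest route is to cite it. Alternatively, one could bypass \cite{A-M-N} entirely and argue as in the proof of Theorem \ref{thm:discrete}: apply Lemma \ref{proper} with $G' \subset \Iso(X,\delta)$ and then Proposition \ref{CNS}, which yields a proper $G$-invariant compatible metric on $X$ directly. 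I would present the latter, self-contained argument in preference to citing Theorem \ref{thm:AMN}, noting the parallel with Theorem \ref{thm:discrete}.
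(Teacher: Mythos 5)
Your main argument is correct and is essentially the paper's own proof: (i) $\Rightarrow$ (ii) by observing that $G_x y$ sits inside the finite ball $B(x,d(x,y))$, and (ii) $\Rightarrow$ (i) by forming $\rho:G\to{\rm Bij}(X)$ and the closure $G'$ as in Theorem \ref{moi}, then invoking Theorem \ref{thm:AMN} for the proper action of the locally compact group $G'$ on the countable discrete (hence second countable locally compact) space $X$, and pulling the resulting proper compatible $G'$-invariant metric back along $\rho$.

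However, the alternative you say you would actually present --- bypassing Theorem \ref{thm:AMN} by applying Lemma \ref{proper} and Proposition \ref{CNS} ``as in Theorem \ref{thm:discrete}'' --- does not go through as stated. Those results are about a coset space $G/H$, i.e.\ a \emph{transitive} action of a second countable locally compact group; Lemma \ref{proper} explicitly uses transitivity of the $G'$-action to identify the space with $G'/H'$. In the present proposition $G$ is an abstract group and $X$ is an arbitrary countable $G$-set, which may decompose into infinitely many orbits, so $X$ need not be of the form $G'/H'$ for any stabilizer $H'$. You could run the coset-space machinery orbit by orbit, but you would then still need to glue the proper invariant metrics on the individual orbits into a single $G$-invariant locally finite metric on all of $X$, and this gluing is not automatic: for instance, making the distance between points of distinct orbits constant violates the triangle inequality as soon as some orbit has infinite diameter, and any invariant cross-orbit assignment must be checked for both the triangle inequality and finiteness of balls. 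Avoiding exactly this extra step is what the appeal to Theorem \ref{thm:AMN} (which handles non-transitive proper actions) buys, and it is the route the paper takes. So keep your first version and drop the ``preferred'' alternative, or else supply the orbit-gluing argument explicitly.
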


\begin{proof} Denote by $G_x$ the stabilizer of $x$. We observe that (i) $\Rightarrow$ (ii) is immediate since for every $x,y\in X$, the set $G_x y$ is contained in the ball of center $x$ and radius $d(x,y)$.

(ii) $\Rightarrow$ (i). We follow the proof of (iii) $\Rightarrow$ (i) in Theorem \ref{thm:discrete}.  We introduce $\rho : G\to {\rm Bij}(X)$ and the group $G'$ as in Theorem \ref{moi}.
Since $G'$ is a locally compact group acting properly on $X$, the conclusion follows from Theorem \ref{thm:AMN}.
\end{proof}



\begin{thebibliography}{10}

\bibitem{A-M-N}
H.~Abels, A.~Manoussos, and G.~Noskov.
\newblock Proper actions and proper invariant metrics.
\newblock {\em J. Lond. Math. Soc. (2)}, 83(3):619--636, 2011.

\bibitem{Bir}
Garrett Birkhoff.
\newblock A note on topological groups.
\newblock {\em Compositio Math.}, 3:427--430, 1936.

\bibitem{Bou1045}
N.~Bourbaki.
\newblock {\em \'{E}l\'ements de math\'ematique. {I}: {L}es structures
  fondamentales de l'analyse. {F}ascicule {VIII}. {L}ivre {III}: {T}opologie
  g\'en\'erale. {C}hapitre 9: {U}tilisation des nombres r\'eels en topologie
  g\'en\'erale}.
\newblock Deuxi\`eme \'edition revue et augment\'ee. Actualit\'es Scientifiques
  et Industrielles, No. 1045. Hermann, Paris, 1958.

\bibitem{Bou1084}
N.~Bourbaki.
\newblock {\em \'{E}l\'ements de math\'ematique. {F}asc. {X}. {P}remi\`ere
  partie. {L}ivre {III}: {T}opologie g\'en\'erale. {C}hapitre 10: {E}spaces
  fonctionnels}.
\newblock Deuxi\`eme \'edition, enti\`erement refondue. Actualit\'es Sci.
  Indust., No. 1084. Hermann, Paris, 1961.

\bibitem{Bou1306}
N.~Bourbaki.
\newblock {\em \'{E}l\'ements de math\'ematique. {F}ascicule {XXIX}. {L}ivre
  {VI}: {I}nt\'egration. {C}hapitre 7: {M}esure de {H}aar. {C}hapitre 8:
  {C}onvolution et repr\'esentations}.
\newblock Actualit\'es Scientifiques et Industrielles, No. 1306. Hermann,
  Paris, 1963.

\bibitem{Dix}
Jacques Dixmier.
\newblock {\em Les {$C^{\ast} $}-alg\`ebres et leurs repr\'esentations}.
\newblock Deuxi\`eme \'edition. Cahiers Scientifiques, Fasc. XXIX.
  Gauthier-Villars \'Editeur, Paris, 1969.

\bibitem{G-K}
Su~Gao and Alexander~S. Kechris.
\newblock On the classification of {P}olish metric spaces up to isometry.
\newblock {\em Mem. Amer. Math. Soc.}, 161(766):viii+78, 2003.

\bibitem{H-P}
Uffe Haagerup and Agata Przybyszewska.
\newblock Proper metrics on locally compact groups, and proper isometric
  actions on banach spaces.
\newblock arXiv:math.OA/06067964 v1, 2006.

\bibitem{Kak}
Shizuo Kakutani.
\newblock \"{U}ber die {M}etrisation der topologischen {G}ruppen.
\newblock {\em Proc. Imp. Acad.}, 12(4):82--84, 1936.

\bibitem{K-N}
Shoshichi Kobayashi and Katsumi Nomizu.
\newblock {\em Foundations of differential geometry. {V}ol {I}}.
\newblock Interscience Publishers, a division of John Wiley \& Sons, New
  York-Lond on, 1963.

\bibitem{Kos}
J.-L. Koszul.
\newblock {\em Lectures on groups of transformations}.
\newblock Notes by R. R. Simha and R. Sridharan. Tata Institute of Fundamental
  Research Lectures on Mathematics, No. 32. Tata Institute of Fundamental
  Research, Bombay, 1965.

\bibitem{Kris}
Leif Kristensen.
\newblock Invariant metrics in coset spaces.
\newblock {\em Math. Scand.}, 6:33--36, 1958.

\bibitem{M-Z}
Deane Montgomery and Leo Zippin.
\newblock {\em Topological transformation groups}.
\newblock Robert E. Krieger Publishing Co., Huntington, N.Y., 1974.

\bibitem{Struble}
Raimond~A. Struble.
\newblock Metrics in locally compact groups.
\newblock {\em Compositio Math.}, 28:217--222, 1974.

\bibitem{Tu}
Jean-Louis Tu.
\newblock Remarks on {Y}u's ``property {A}'' for discrete metric spaces and
  groups.
\newblock {\em Bull. Soc. Math. France}, 129(1):115--139, 2001.

\bibitem{D-W}
D.~van Dantzig and B.~L. van~der Waerden.
\newblock \"{U}ber metrisch homogenen r\"aume.
\newblock {\em Abh. Math. Sem. Hambourg}, 6:374--376, 1928.

\end{thebibliography}
\end{document}